\def \[{\begin{equation}}
\def \]{\end{equation}}
\def\bdes{\begin{description}}
\def\edes{\end{description}}
\def\benu{\begin{enumerate}}
\def\eenu{\end{enumerate}}
\def\bitm{\begin{itemize}}
\def\eitm{\end{itemize}}
\def\R{{\sl I\kern-3.2pt R}}
\def\sqr#1#2{{\vcenter{\hrule height .#2pt
      \hbox{\vrule width .#2pt height#1pt \kern#1pt\vrule width.#2pt}
                       \hrule height.#2pt}}}
\def\x{{\cal X}}
\newtheorem{proposition}{Proposition}[section]
\newtheorem{remark}{Remark}[section]
\newtheorem{thm}{Theorem}[section]
\newtheorem{pro}[thm]{Proposition}
\newtheorem{lem}[thm]{Lemma}
\begin{document}
\title{{\Large \bf   A Pohozaev Identity for the Fractional H$\acute{e}$non System }}
\author{Pei Ma\;\;,
Fengquan Li\;\;,
Yan Li\thanks{Corresponding author.}}

\date{\today}
\maketitle

\begin{abstract}
In this paper, we study the Pohozaev identity associated with a H$\acute{e}$non-Lane-Emden system involving the fractional Laplacian:
\begin{eqnarray*}\left\{
\begin{aligned}
 &(-\triangle)^su=|x|^av^p,~~~~&x\in\Omega,\\
 &(-\triangle)^sv=|x|^bu^q,~~~~&x\in\Omega,\\
 &u=v=0,~~~~&x\in R^n\backslash\Omega,
 \end{aligned}
 \right.
\end{eqnarray*}
 in a star-shaped and bounded domain $\Omega$ for $s\in(0,1)$. As an application of our identity, we deduce the nonexistence of positive solutions in the critical and supercritical cases.
\end{abstract}
\bigskip\noindent
{\bf Key words}: Pohozaev identity, fractional Laplacian, H$\acute{e}$non-Lane-Emden system, nonexistence of solutions.

\section{Introduction}
We study the following system
\begin{eqnarray}\label{1.1}
\left\{
\begin{aligned}
 &(-\triangle)^{s}u=|x|^av^p,~~~~~~&x\in\Omega,\\
 &(-\triangle)^{s}v=|x|^bu^q,~~~~~~&x\in\Omega,\\
 &u=0, v= 0,~~~~~~&x\in R^n  \backslash \Omega,
 \end{aligned}
 \right.
\end{eqnarray}
in a star-sharped and bounded domain $\Omega\subset R^n$ with $C^{1,1}$ boundary. Assume that $s\in(0,1)$, $pq>1$, $p$, $q$, $a$, $b$ $\geq0$, $n\geq1$. We will prove the nonexistence of positive solutions for
system (\ref{1.1}) in the critical and supercritical cases $\frac{n+a}{p+1}+\frac{n+b}{q+1}\leq n-2s$.

The fractional Laplacian in $R^n$ is a nonlocal pseudo-differential operator, assuming the form
\begin{eqnarray}\nonumber
(-\triangle)^{s}u&=&C_{n,s}PV \int_{R^n}\frac{u(x)-u(y)}{|x-y|^{n+2s}}dy\\
&=&C_{n,s}\lim_{\varepsilon\rightarrow 0} \int_{R^n\setminus B_\varepsilon(0)}\frac{u(x)-u(y)}{|x-y|^{n+2s}}dy,\label{1.2}
\end{eqnarray}
where $C_{n,s}$ is a normalization constant.

Let
\begin{eqnarray*}
\mathcal{L}_s=
\{u:R^n\rightarrow R\mid\int_{R^n}\frac{|u(x)|}{1+|x|^{n+2s}}dx<\infty\}.
\end{eqnarray*}
Obviously, the integral in (\ref{1.2}) is well defined for $u \in \mathcal{L}_\alpha\cap C^{1,1}_{loc}$.

In this paper, we consider solutions in the weak sense. Define
\begin{equation*}
  H^s(R^n)=\{u:R^n\rightarrow R\mid\int_{R^n}|\xi|^{2s}|\hat{u}(\xi)|^2d\xi<+\infty\}.
\end{equation*}

Let $H_0^s$ be the completion of $C_0^\infty(\Omega)$ under this norm
$$\|u\|_{H_0^s}^2=\int_{R^n}|\xi|^{2s}|\hat{u}(\xi)|^2d\xi.$$

We say that $u\in H_0^s(\Omega)$ is a weak solution of
\begin{eqnarray*}
\left\{
\begin{aligned}
 &(-\triangle)^{s}u=f(x,u),~~~~~~&x\in\Omega,\\
 &u=0, ~~~~~~&x\in R^n  \backslash \Omega,
 \end{aligned}
 \right.
\end{eqnarray*}
if for all $\varphi\in C_0^\infty(\Omega)$,
$$\int_{R^n}(-\triangle)^\frac{s}{2}u(-\triangle)^\frac{s}{2}\varphi dx=\int_\Omega f(x,u)\varphi dx,$$
where
$$\int_{R^n}(-\triangle)^\frac{s}{2}u(-\triangle)^\frac{s}{2}\varphi dx=\int_{R^n}|\xi|^{2s}\hat{u}(\xi)\hat{\varphi}(\xi) d\xi.$$

The single equation below has been well studied by many mathematicians:
\begin{eqnarray}\label{4.1}
\left\{
\begin{aligned}
 &(-\triangle)^{s}u=|x|^\alpha u^p,~~~~~~&x\in\Omega,\\
 &u=0,~~~~~~&x\in R^n  \backslash \Omega.
 \end{aligned}
 \right.
 \end{eqnarray}
It is called the Hardy-type for $\alpha<0$, due to its relation to the Hardy-Sobolev inequality. For $\alpha>0$, the equation is known as the H$\acute{e}$non-type, because it was introduced by H$\acute{e}$non in 1973 (\cite{HM}) for the study of stellar systems.

In \cite{RS}, the authors considered
 \begin{eqnarray}\label{eee1}
\left\{
\begin{aligned}
&(-\triangle)^su=f(u), & x \in \Omega,\\
 &u\equiv 0,& x \in R^n\setminus\Omega.
 \end{aligned}
 \right.
 \end{eqnarray}
 There they proved
  \begin{proposition}(\textbf{Pohozaev identity for single equation})
Let $\Omega$ be a bounded and $C^{1,1}$ domain and $u$ is a solution of  (\ref{eee1}).
Then the following identity holds
$$\int_{\Omega}(x\cdot \nabla u)(-\triangle)^s udx=\frac{2s-n}{2}\int_{\Omega}u(-\triangle)^s u dx-\frac{\Gamma(1+s)^2}{2}\int_{\partial\Omega}(\frac{u}{\delta^s})^2(x\cdot\nu)d\sigma,$$
where $\nu$ is the unit outward normal to $\partial \Omega$ at $x$, and $\Gamma$ is the Gamma function.
 \end{proposition}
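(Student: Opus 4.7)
I would prove this Pohozaev-type identity by combining a dilation (scaling) argument with sharp boundary regularity of $u$. Since $\Omega$ is star-shaped with respect to (say) the origin, for each $\lambda\geq 1$ the rescaled function $u_\lambda(x):=u(\lambda x)$ is supported inside $\Omega/\lambda\subset\Omega$, so it belongs to the energy space $H_0^s(\Omega)$. The plan is to compute the derivative at $\lambda=1^+$ of the non-local Dirichlet form of $u_\lambda$ in two different ways: one via the explicit scaling of the $H^s$-seminorm, the other via differentiation under the integral. The two expressions will differ precisely by the boundary integral.

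\textbf{Main computation.} A change of variables gives
$$\int_{R^n}|(-\triangle)^{s/2}u_\lambda|^2\,dx=\lambda^{2s-n}\int_{R^n}|(-\triangle)^{s/2}u|^2\,dx,$$
so differentiating the right side at $\lambda=1$ yields $(2s-n)\int_\Omega u(-\triangle)^s u\,dx$ after rewriting the Dirichlet form as $\int u(-\triangle)^s u$. Differentiating the left side, using $\partial_\lambda u_\lambda|_{\lambda=1}=x\cdot\nabla u$, formally produces $2\int_\Omega(x\cdot\nabla u)(-\triangle)^s u\,dx$ for the interior contribution. If $u$ were $C^1$ across $\partial\Omega$ the two computations would match and give the identity without any boundary term; the asymmetry comes entirely from the singular boundary profile $u(x)\sim c(x_0)\delta(x)^s$ that every weak solution must exhibit.

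\textbf{Extracting the boundary term.} To identify the boundary contribution I would first invoke boundary regularity for $(-\triangle)^s$ to establish that $u/\delta^s$ extends continuously to $\bar\Omega$, with a precise expansion $u(x)=(u/\delta^s)(x_0)\,\delta(x)^s+o(\delta^s)$ as $x\to x_0\in\partial\Omega$. Then I would localize and flatten $\partial\Omega$ (possible thanks to $C^{1,1}$ regularity) to reduce to the half-space model profile $u_0(x)=(x_n)_+^s$, for which $(-\triangle)^{s/2}u_0$ can be computed explicitly; the constant $\Gamma(1+s)^2$ emerges naturally from the one-dimensional Riesz potential of $x_+^s$. Weighting by the geometric factor $x\cdot\nu$ (smooth because $\partial\Omega$ is $C^{1,1}$) and integrating over $\partial\Omega$ then assembles the claimed term $\tfrac{\Gamma(1+s)^2}{2}\int_{\partial\Omega}(u/\delta^s)^2(x\cdot\nu)\,d\sigma$.

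\textbf{Main obstacle.} The hard part is the rigorous passage to the limit $\lambda\to 1^+$: one has to differentiate a non-local quadratic form in $\lambda$ and justify commuting the derivative with the principal-value integral defining $(-\triangle)^s$ in a neighbourhood of $\partial\Omega$. This demands (i) uniform control of $(-\triangle)^{s/2}u_\lambda$ near $\partial\Omega$ independently of $\lambda$, and (ii) the sharp boundary asymptotics $u/\delta^s\in C^0(\bar\Omega)$ with a quantitative remainder. Both ingredients lie beyond classical Calder\'on--Zygmund theory and form the technical core on which any proof of this identity must rest.
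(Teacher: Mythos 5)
Your scaling idea is the right one and the same one underlying the paper's argument (which follows Ros-Oton--Serra \cite{RS} and is reproduced in the proof of Theorem 3.1 for the system). But there are two points where your route diverges from the paper's and where your sketch leaves a real gap. First, the paper does not differentiate the pure quadratic form $\|(-\triangle)^{s/2}u_\lambda\|^2$; that quantity scales \emph{exactly} as $\lambda^{2s-n}$, so its $\lambda$-derivative carries no boundary information at all, and you are left having to quantify the "error" committed by formally differentiating under the integral --- a somewhat slippery way to isolate the boundary term. The paper instead differentiates the \emph{mixed} pairing $\int u_\lambda(-\triangle)^s u$, whose derivative is $\int(x\cdot\nabla u)(-\triangle)^s u$ by dominated convergence (using $|\nabla u|\lesssim\delta^{s-1}$), and then uses the substitution $y=\sqrt\lambda\,x$ to write $\int u_\lambda(-\triangle)^s u=\lambda^{(2s-n)/2}\int(-\triangle)^{s/2}u(\sqrt\lambda\,y)(-\triangle)^{s/2}u(y/\sqrt\lambda)\,dy$; the split into the $\frac{2s-n}{2}\int u(-\triangle)^s u$ piece plus the boundary piece then falls out cleanly without ever claiming a differentiation-under-the-integral that fails.

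Second, and more substantially: your recipe for the constant ("flatten the boundary, use the model $(x_n)_+^s$, read off $\Gamma(1+s)^2$ from the Riesz potential") skips the structural fact that the whole argument hinges on. The crucial input is not $u/\delta^s\in C^0(\overline\Omega)$ but rather the boundary expansion of $(-\triangle)^{s/2}u$ itself (Proposition 2.1 here, from \cite{RS}): $(-\triangle)^{s/2}u(x)=c_1\{\log^-\delta(x)+c_2\chi_\Omega(x)\}\,\bar u(x)+h(x)$ with $c_1=\Gamma(1+s)\sin(\pi s/2)/\pi$, $c_2=\pi/\tan(\pi s/2)$ --- i.e.\ $(-\triangle)^{s/2}u$ has a \emph{logarithmic singularity and a jump} across $\partial\Omega$. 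Passing to spherical coordinates reduces the boundary derivative to one-dimensional integrals along rays, and the boundary constant is assembled from $\frac{d}{d\lambda}\big|_{\lambda=1}\int_0^\infty\log^-|\lambda t-1|\log^-|t/\lambda-1|\,dt=-\pi^2$ (log--log interaction) and $\frac{d}{d\lambda}\big|_{\lambda=1}\int_0^\infty\chi_{[0,1]}(\lambda t)\chi_{[0,1]}(t/\lambda)\,dt=-1$ (jump--jump interaction), giving $c_1^2(\pi^2+c_2^2)=\Gamma(1+s)^2$ only after the trigonometric cancellation. Your sketch does not acknowledge the logarithmic singularity of $(-\triangle)^{s/2}u$ at all, and without it the boundary term cannot be extracted; "computing the Riesz potential of $x_+^s$" would, if carried out, lead you back to exactly this log-plus-jump expansion and the same delicate $1$D computation, so the work is not avoided but merely postponed. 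In short: correct scaling framework, but the identification of the boundary constant --- the mathematical heart of the proposition --- is missing the key analytic ingredient.
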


Using the Pohozaev identity above, they were able to deduce the nonexistence of nontrivial solutions.

In this paper, when investigating the more general system (\ref{1.1}), we find a new phenomenon that there are two more terms on the right hand side of the identity:

\begin{thm}\label{thm2}(\textbf{Pohozaev identity for systems})
Let $\Omega$ be a bounded and $C^{1,1}$ domain, $\delta(x)=dist(x,\partial\Omega)$. Assume that  $(u,v)$ is a pair of solution in (\ref{1.1}), then the following identity holds
\begin{eqnarray}\nonumber
&&\int_\Omega(x\cdot\nabla v)(-\triangle)^s udx\\ \nonumber
&=&\frac{2s-n}{2}\int_\Omega v(-\triangle)^sudx-\frac{\Gamma(1+s)^2}{2}\int_{\partial\Omega}\frac{u}{\delta^s}\frac{v}{\delta^s}(x\cdot\nu)d\sigma\\ \nonumber
&-&\frac{1}{2}\int_{\Omega}\bigg(x\nabla U(x)V(x)-xU(x)\nabla V(x)\bigg)dx
-\frac{1}{2}\int_{R^n\backslash\Omega}\bigg(x\nabla U(x)V(x)-xU(x)\nabla V(x)\bigg)dx,\\
\label{1.10}
\end{eqnarray}
where $U(x)=(-\triangle)^{s/2}u(x)$ and $V(x)=(-\triangle)^{s/2}v(x)$.
\end{thm}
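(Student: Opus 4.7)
The plan is to isolate $C := \int_\Omega(x\cdot\nabla v)(-\triangle)^s u\,dx$ by decomposing it as $C = \tfrac{1}{2}(C+D) + \tfrac{1}{2}(C-D)$, where $D := \int_\Omega(x\cdot\nabla u)(-\triangle)^s v\,dx$. The symmetric part $C+D$ will be obtained by polarizing the single-equation identity of Proposition~1.1, and the antisymmetric part $C-D$ by a commutator computation between $(-\triangle)^{s/2}$ and the dilation operator $x\cdot\nabla$.

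For $C+D$, I would apply Proposition~1.1 separately to the three admissible functions $u+v$, $u$, and $v$ (each vanishing outside $\Omega$). Expanding the identity for $u+v$, using the symmetry $\int u(-\triangle)^s v\,dx=\int v(-\triangle)^s u\,dx$ (immediate from Plancherel), and subtracting the identities for $u$ and $v$ leaves
\begin{equation*}
C+D = (2s-n)\int_\Omega v(-\triangle)^s u\,dx - \Gamma(1+s)^2\int_{\partial\Omega}\frac{u}{\delta^s}\frac{v}{\delta^s}(x\cdot\nu)\,d\sigma.
\end{equation*}
For $C-D$, I would pass $(-\triangle)^{s/2}$ onto $x\cdot\nabla v$ via $\int f(-\triangle)^s g\,dx = \int((-\triangle)^{s/2}f)((-\triangle)^{s/2}g)\,dx$, apply the commutation relation
\begin{equation*}
(-\triangle)^{s/2}(x\cdot\nabla h) = x\cdot\nabla\bigl((-\triangle)^{s/2}h\bigr) + s\,(-\triangle)^{s/2}h,
\end{equation*}
which is immediate on the Fourier side since the symbol $|\xi|^s$ is homogeneous of degree $s$, and observe that the two $s\int UV$ contributions arising from $C$ and $D$ cancel. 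This yields
\begin{equation*}
C-D = \int_{R^n}\bigl[U(x\cdot\nabla V) - (x\cdot\nabla U)V\bigr]\,dx.
\end{equation*}
Averaging the two formulas and splitting the $R^n$-integral as $\int_\Omega + \int_{R^n\setminus\Omega}$ reproduces exactly~(\ref{1.10}).

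The main technical difficulty is justifying the commutator step for $u,v\in H_0^s(\Omega)$, which have only $\delta^s$-type boundary behavior; in particular $\nabla v$ is $\delta^{s-1}$-singular at $\partial\Omega$, so $(-\triangle)^{s/2}(x\cdot\nabla v)$ requires a careful distributional interpretation. Indeed, applying the same commutator computation to the symmetric part $C+D$ would give $(2s-n)\int v(-\triangle)^s u\,dx$ and miss the $\Gamma(1+s)^2$ boundary term altogether, so a distributional correction supported on $\partial\Omega$ must actually be present. The scheme nevertheless goes through because this correction is a symmetric bilinear expression in $(u,v)$ (a multiple of the trace form $(u,v)\mapsto\int_{\partial\Omega}\frac{u}{\delta^s}\frac{v}{\delta^s}(x\cdot\nu)\,d\sigma$); it contributes fully to $C+D$ and cancels from $C-D$. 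I would make this rigorous by approximating $u$ and $v$ by smoother functions, applying the commutator there where it is unambiguous, and passing to the limit using the boundary regularity $u/\delta^s,\,v/\delta^s\in C^\alpha(\overline\Omega)$ supplied by the Ros-Oton--Serra theory.
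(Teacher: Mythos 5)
Your decomposition $C=\tfrac12(C+D)+\tfrac12(C-D)$ is a genuinely different route from the paper, which never polarizes: the paper directly adapts the Ros-Oton--Serra dilation argument, reduces $\frac{d}{d\lambda}\big|_{\lambda=1^+}I_\lambda$ to a one-dimensional integral over rays through $\partial\Omega$, and applies a bilinear version of the RS limit lemma (Proposition~2.2 and Lemmas 2.3--2.6), where the two one-sided integrals $\int_0^1$ and $\int_1^\infty$ emerge naturally as one-sided limits. Your polarization step for $C+D$ is clean and correct (granting that Proposition~1.1 is really a regularity statement, so it applies to $u$, $v$, $u+v$ linearly, as the Ros-Oton--Serra theory permits), and it is arguably a tidier way to extract the $(2s-n)$ and $\Gamma(1+s)^2$ terms than the paper's direct computation.

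The gap is in the antisymmetric part. The formal conclusion $C-D=\int_{R^n}\bigl[U(x\cdot\nabla V)-(x\cdot\nabla U)V\bigr]\,dx$ is not the right-hand side of~(\ref{1.10}): the theorem's last two terms are one-sided limits $\lim_{\varepsilon\to0}\int_{\Omega\setminus(\partial\Omega)_\varepsilon}$ and $\lim_{\varepsilon\to0}\int_{(R^n\setminus\Omega)\setminus(\partial\Omega)_\varepsilon}$, and the paper's Remark~1.1 states explicitly that their sum is \emph{not} $\int_{R^n}$ of the same integrand, because $\nabla U$, $\nabla V$ carry surface measures on $\partial\Omega$ (coming from the $c_1c_2\chi_\Omega\bar u$ jump in Proposition~2.1) while $U$, $V$ are simultaneously $\log^-\delta$-singular there, so the product has a genuine Dirac contribution on $\partial\Omega$. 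Your argument that a symmetric trace-form correction cancels from $C-D$ does not address this: even after the $(\log\delta)\cdot\delta_{\partial\Omega}$ pieces cancel in the antisymmetrization, the remaining $\chi_\Omega$-jump paired with the regular parts $h_0,h_1$ of $U,V$ leaves an antisymmetric boundary contribution that is not obviously zero and is exactly what distinguishes the raw $\int_{R^n}$ from the principal-value sum in the theorem. Moreover, before one even reaches this point, the pairing $\langle x\cdot\nabla V,U\rangle$ is not a legitimate duality (neither factor is a test function, and $x\cdot\nabla v\notin H^s$ a priori since $\nabla v\sim\delta^{s-1}$), so the identity $C=\int(-\triangle)^{s/2}(x\cdot\nabla v)\,(-\triangle)^{s/2}u$ needs justification before the commutator can be invoked. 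Your proposed smoothing argument is a reasonable plan, but carrying it out, and in particular tracking exactly which principal-value interpretation of the boundary singularity survives the limit, is precisely the hard analytic content that the paper handles through the one-dimensional reduction and Lemmas~2.3--2.6; as written the proposal leaves that content unresolved.
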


It's worth pointing out that new difficulty
arises in the application of this new Pohozaev identity for the system (\ref{1.1}).
Because of the uncertainty of the sign for the last two terms in (\ref{1.10}), this identity does not come as handy as its counterpart in  \cite{RS}. To overcome this, briefly speaking, we partner (\ref{1.10})
with a second identity to get rid of the undeterminable terms.
We will give more detailed account in later proofs.

%In \cite{CL2}, the authors studied the integral system and obtained the radial symmetry of positive solutions by using the method of moving planes in integral forms. In \cite{LY}, the authors proved nonexistence of positive solutions for  Hardy-h$\acute{e}$non type elliptic system. In \cite{SZ1}, the authors obtained the existence and nonexistence of positive radial solutions for the the Lane-Emden system.
Our goal here is to establish similar nonexistence result for
system (\ref{1.1}). As an application of Theorem (\ref{thm2}), we have
%\begin{remark}
 %In \cite{RS2}, the authors showed that if $(u,v)$ is a pair of solutions for problem (\ref{1.1}), then it satisfies

%(a) $u$, $v\in C^s(R^n)$ and for every $\beta\in[s,1+2s)$, $u$, $v$ are of class $C^\beta(\Omega)$ and
%$$[u]_{C^\beta(\{x\in\Omega: \delta(x)\geq\rho\})}\leq C\rho^{s-\beta}\,\,\,\,for\,\,all\,\,\rho\in(0,1),$$
%$$[v]_{C^\beta(\{x\in\Omega: \delta(x)\geq\rho\})}\leq C\rho^{s-\beta}\,\,\,\,for\,\,all\,\,\rho\in(0,1).$$
%(b) The function $u/{\delta^s}$ and $v/{\delta^s}$ can be continuously extended to $\bar{\Omega}$. Moreover, there exists %$\alpha\in(0,1)$ such that $u/{\delta^s}$, $v/{\delta^s}\in C^\alpha(\bar{\Omega}).$ In addition, for all %$\beta\in[\alpha,s+\alpha],$ it holds the estimate
%$$[u/{\delta^s}]_{C^\beta(\{x\in\Omega: \delta(x)\geq\rho\})}\leq C\rho^{\alpha-\beta}\,\,\,\,for\,\,all\,\,\rho\in(0,1).$$
%$$[v/{\delta^s}]_{C^\beta(\{x\in\Omega: \delta(x)\geq\rho\})}\leq C\rho^{\alpha-\beta}\,\,\,\,for\,\,all\,\,\rho\in(0,1).$$
%(c) $(-\triangle)^su$ and $(-\triangle)^s v$ are pointwise bounded in $\Omega.$
%\end{remark}
\begin{thm}\label{thm1}
Assume that $\Omega$ is a bounded and star-shaped domain with $C^{1,1}$ boundary. Then system (\ref{1.1}) admits no positive bounded solution in both critical and supercritical cases
$\frac{n+a}{p+1}+\frac{n+b}{q+1}\leq n-2s$.
\end{thm}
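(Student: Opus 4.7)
The plan is to combine the Pohozaev identity of Theorem~\ref{thm2} with its ``swapped'' counterpart (obtained by interchanging the roles of $u$ and $v$ in the derivation; this is legitimate since (\ref{1.10}) relies only on $u,v\in H_0^s(\Omega)$), so that the two sign--indeterminate terms on the right of (\ref{1.10}) cancel, and then to exploit star-shapedness of $\Omega$. The integrands of those last two terms may be rewritten as $x\cdot(V\nabla U - U\nabla V)$; under the swap they become $x\cdot(U\nabla V - V\nabla U)$, the exact negative, so adding the two identities eliminates them and yields
\begin{equation*}
\int_\Omega\bigl[(x\cdot\nabla v)(-\triangle)^s u + (x\cdot\nabla u)(-\triangle)^s v\bigr]dx \;=\; \frac{2s-n}{2}\int_\Omega\bigl[v(-\triangle)^s u + u(-\triangle)^s v\bigr]dx \;-\; \Gamma(1+s)^2\int_{\partial\Omega}\frac{u}{\delta^s}\frac{v}{\delta^s}(x\cdot\nu)\,d\sigma.
\end{equation*}

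Next I substitute the system (\ref{1.1}). Writing $(x\cdot\nabla v)v^p = \tfrac{1}{p+1}x\cdot\nabla(v^{p+1})$ and integrating by parts using $\nabla\cdot(|x|^a x)=(n+a)|x|^a$ (the boundary term vanishes because bounded positive solutions of the fractional problem are continuous up to $\partial\Omega$ with $v=0$ there), the left-hand side collapses to $-\tfrac{n+a}{p+1}\int_\Omega|x|^a v^{p+1}\,dx - \tfrac{n+b}{q+1}\int_\Omega|x|^b u^{q+1}\,dx$. On the right, $\int_\Omega v(-\triangle)^s u\,dx = \int_\Omega |x|^a v^{p+1}\,dx$ and $\int_\Omega u(-\triangle)^s v\,dx = \int_\Omega |x|^b u^{q+1}\,dx$, and by the self-adjointness of $(-\triangle)^s$ on $H_0^s(\Omega)$ the two quantities are equal; call their common value $\mathcal{I}$. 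The identity then reduces to
\begin{equation*}
\Gamma(1+s)^2\int_{\partial\Omega}\frac{u}{\delta^s}\frac{v}{\delta^s}(x\cdot\nu)\,d\sigma \;=\; \Bigl[\tfrac{n+a}{p+1}+\tfrac{n+b}{q+1}-(n-2s)\Bigr]\mathcal{I}.
\end{equation*}

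Under the critical/supercritical hypothesis the bracket is $\le 0$, whereas the boundary integral is $\ge 0$ since $\Omega$ is star-shaped (so $x\cdot\nu\ge 0$) and $u,v>0$ force $u/\delta^s,\,v/\delta^s\ge 0$; both sides must therefore vanish. In the strictly supercritical case the bracket is negative, so $\mathcal{I}=0$; this gives $v\equiv 0$, and the first equation then forces $u\equiv 0$, contradicting positivity. The main obstacle is the critical case, where one only deduces that the boundary integral vanishes. To close that case I would invoke the fractional Hopf-type boundary lemma, which ensures $u/\delta^s,\,v/\delta^s>0$ on $\partial\Omega$ for positive bounded solutions, together with the elementary fact that a bounded star-shaped $C^{1,1}$ domain has $x\cdot\nu>0$ on a subset of $\partial\Omega$ of positive surface measure; these two facts are incompatible with the boundary integral being zero. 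A secondary technical point is justifying the integration by parts and the self-adjointness identity under only weak regularity, which is handled by the standard $C^s(\overline\Omega)$ boundary regularity available for bounded solutions of fractional problems with $L^\infty$ right-hand side.
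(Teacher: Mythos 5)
Your proposal is correct and takes essentially the same route as the paper: both add the identity of Theorem~\ref{thm2} to its $u\leftrightarrow v$-swapped counterpart to cancel the two sign-indeterminate volume terms, reduce via integration by parts and the self-adjointness of $(-\triangle)^s$ to a single scalar identity relating the boundary term to $\bigl[\tfrac{n+a}{p+1}+\tfrac{n+b}{q+1}-(n-2s)\bigr]\mathcal{I}$, and then exploit star-shapedness. The one place where you are more explicit than the paper is the critical case: the paper simply asserts the boundary integral is strictly positive (and hence gets a strict inequality that kills both cases at once), whereas you note that for the critical equality case one genuinely needs a fractional Hopf-type lemma to guarantee $u/\delta^s,v/\delta^s>0$ on $\partial\Omega$; this is a real refinement of a point the paper glosses over, and your treatment is the more careful one.
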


\begin{remark}  Notice that in (\ref{1.10}),
\begin{eqnarray*}
&&\int_{\Omega}x\nabla U(x)V(x)-xU(x)\nabla V(x)dx
+\int_{R^n\backslash\Omega}\bigg(x\nabla U(x)V(x)-xU(x)\nabla V(x)\bigg)dx\\
&\neq&\int_{R^n}\bigg(x\nabla U(x)V(x)-xU(x)\nabla V(x)\bigg)dx,
\end{eqnarray*}
because the integrand is highly singular on $\partial \Omega$ (some kind of Delta measure). To illustrate this point, consider the simple example,
 \begin{eqnarray*}
 f(t)=\left\{
\begin{aligned}
 t+1,~~~t\geq0,\\
 t,~~~t<0,
 \end{aligned}
 \right.
 \end{eqnarray*}
 then $f'(t)=1+\delta(t)$, where $\delta$ is the Delta function. Since $\int_{-1}^0f'(t)dt=\lim\limits_{\varepsilon\rightarrow0}\int_{-1}^{-\varepsilon} f'(t)dt=1$ and $\int^{1}_0f'(t)dt=\lim\limits_{\varepsilon\rightarrow0}\int^{1}_\varepsilon f'(t)dt=1$, then
\begin{equation*}
\int_{-1}^0f'(t)dt+\int_0^1f'(t)dt=2\neq\int_{-1}^1f'(t)dt=3.
\end{equation*}

Let $\Omega_\varepsilon=\{x\in R^n\mid\text{dist}(x,\Omega)<\varepsilon\}$, then the integral under consideration is in the following sense
\begin{equation*}
\int_{R^n\backslash\Omega}\bigg(x\nabla U(x)V(x)-xU(x)\nabla V(x)\bigg)dx=\lim_{\varepsilon\rightarrow0}\int_{R^n\backslash\Omega_\varepsilon}\bigg(x\nabla U(x)V(x)-xU(x)\nabla V(x)\bigg)dx,
\end{equation*}
and
$$\int_\Omega\big(x\nabla U(x)V(x)-xU(x)\nabla V(x)\big)dx$$
 is defined similarly.
\end{remark}

\begin{remark}
In the fractional Pohozaev identity, the functions $u/{\delta^s}|_{\partial\Omega}$ and $v/{\delta^s}|_{\partial\Omega}$ play the roles of $\partial u/\partial \nu$ and $\partial v/\partial \nu$ in the classical Pohozaev identity. Surprisingly, from a nonlocal problem we obtain an identity with a boundary term (an integral over $\partial\Omega$) which is completely local.
\end{remark}

In recent years, the fractional H$\acute{e}$non system has
 received a lot of attention.
 %Among which, the investigation through the equivalent integral system has yielded many
% interesting results.

In \cite{DZ}, under some regularity conditions, the authors showed that for $\alpha \in (0,2)$ and $\beta, \gamma>0$, the differential system
 \begin{eqnarray*}
\left\{
\begin{aligned}
 &(-\triangle)^{\frac{\alpha}{2}}u=|x|^\beta v^p,~~~~~~&x\in R^n,\\
 &(-\triangle)^{\frac{\alpha}{2}}v=|x|^\gamma u^q,~~~~~~&x\in R^n,\\
 &u,v\geq 0,
 \end{aligned}
 \right.
 \end{eqnarray*}
 is equivalent to an integral system
 \begin{eqnarray*}
\left\{
\begin{aligned}
 &u(x)=C_1\int_{R^n}\frac{|y|^\beta v^p(y)}{|x-y|^{n-\alpha}}dy\\
 &v(x)=C_2\int_{R^n}\frac{|y|^\gamma u^q(y)}{|x-y|^{n-\alpha}}dy.
 \end{aligned}
 \right.
 \end{eqnarray*}
 Then using the method of moving planes in integral forms,
 they proved some Liouville type theorems.

 In \cite{LNZ2}, the authors considered a weighted system
 \begin{eqnarray*}
\left\{
\begin{aligned}
 &(-\triangle)^{\frac{\alpha}{2}}u=|x|^{-s} u^p,~~~~~~&x\in R^n,\\
 &(-\triangle)^{\frac{\alpha}{2}}u=|x|^{-t} u^q,~~~~~~&x\in R^n,
 \end{aligned}
 \right.
 \end{eqnarray*}
 with $\alpha \in (0, n)$, $0\leq s, t< \alpha$.
 They first established the equivalence between the differential system and an integral system:
 \begin{eqnarray*}
\left\{
\begin{aligned}
 &u(x)=\int_{R^n}\frac{ v^p(y)}{|x-y|^{n-\alpha}|y|^s}dy\\
 &v(x)=\int_{R^n}\frac{ u^q(y)}{|x-y|^{n-\alpha}|y|^t}dy.
 \end{aligned}
 \right.
 \end{eqnarray*}
 Then they proved radial symmetry
 in the critical case $\frac{n-s}{q+1}+\frac{n-t}{p+1}=n-\alpha$,  and the nonexistence in the subcritical case  for positive solutions. For more such details on the application of the method of moving
 planes in integral forms. please see \cite{CL2}, \cite{LY},  \cite{SZ1} and the reference therein.

The paper is organized as follows.  We first give some essential ingredients for the proofs of the main theorems. Then in Section 3, we derive Theorem \ref{thm2} and \ref{thm1}.

\section{Preliminary}

\begin{pro}\label{pro1}\cite{RS}
Let $\Omega$ be a bounded and $C^{1,1}$ domain and $u$ be a function such that $u=0$ in $R^n\backslash\Omega$ and that $u$ satisfies
\begin{enumerate}
\item The function $u/{\delta^s}|_\Omega$ can be continuously extended to $\bar{\Omega}$. Moreover, there exists $\alpha\in(0,1)$ such that $u/{\delta^s}\in C^\alpha(\bar{\Omega})$. In addition, for all $\beta\in [\alpha,s+\alpha]$, it holds the estimate
\begin{equation*}
[u/\delta^s]_{C^\beta(\{x\in\Omega:\delta\geq\rho\})}\leq C\rho^{\alpha-\beta}\\ for\ all\ \rho\in(0,1).
\end{equation*}
\end{enumerate}
Then there exists a $C^\alpha(R^n)$ extension $\bar{u}$ of $u/{\delta^s}|_\Omega$ such that
\begin{equation}\label{1.3}
(-\triangle)^{s/2}u(x)=c_1\{\log^-\delta(x)+c_2\chi_\Omega(x)\}\bar{u}(x)+h(x) \ in\ R^n,
\end{equation}
where $h$ is a $C^\alpha(R^n)$ function, $\log^-t=\min\{\log t,0\}$,
\begin{eqnarray*}
c_1=\frac{\Gamma(1+s)\sin(\frac{\pi s}{2})}{\pi},~~~~~c_2=\frac{\pi}{\tan(\frac{\pi s}{2})}.
\end{eqnarray*}
\end{pro}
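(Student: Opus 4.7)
My plan is to prove (\ref{1.3}) by computing the half-Laplacian explicitly on the model one-dimensional profile $t_+^s$, lifting the result to the half-space in $R^n$, and then transferring it to curved $C^{1,1}$ domains via boundary flattening. The two singular features of the right-hand side of (\ref{1.3}) -- the logarithmic blow-up $\log^-\delta(x)$ and the jump $\chi_\Omega(x)$ -- both originate from this one-dimensional model, so getting the constants right reduces to a careful Fourier calculation.

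First I would treat the one-dimensional model. Using the distributional Fourier transform of $t_+^s$, multiplying by $|\xi|^s$ and inverting, one should obtain the closed form
\begin{equation*}
(-\partial_{tt})^{s/2}(t_+^s)=c_1\bigl(\log^-|t|+c_2\chi_{\{t>0\}}(t)\bigr)+\gamma_0,
\end{equation*}
with $c_1=\Gamma(1+s)\sin(\pi s/2)/\pi$, $c_2=\pi/\tan(\pi s/2)$ as in the statement, and a harmless additive constant $\gamma_0$ that is absorbed into $h$. Since a function depending on a single coordinate has $n$-dimensional fractional Laplacian equal to its one-dimensional counterpart (the transverse Fourier variables contribute trivially), this immediately upgrades to the half-space identity $(-\triangle)^{s/2}\bigl((x_n)_+^s\bigr)=c_1\bigl(\log^- x_n+c_2\chi_{\{x_n>0\}}\bigr)+\gamma_0$ in $R^n$.

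Second, I would localize around each boundary point. Using the $C^{1,1}$ regularity of $\partial\Omega$, cover $\partial\Omega$ by finitely many charts straightening the boundary to $\{y_n=0\}$; in each chart, $\delta(x)^s$ agrees with $(y_n)_+^s$ modulo a perturbation whose half-Laplacian is already $C^\alpha$. Fix a $C^\alpha(R^n)$ extension $\bar u$ of $u/\delta^s|_\Omega$, and for a base point $x_0\in\partial\Omega$ decompose
\begin{equation*}
u(x)=\bar u(x_0)\,\delta(x)^s\chi_\Omega(x)+\bigl(\bar u(x)-\bar u(x_0)\bigr)\delta(x)^s\chi_\Omega(x).
\end{equation*}
Applying the model identity to the frozen-coefficient first piece produces $c_1\bigl(\log^-\delta+c_2\chi_\Omega\bigr)\bar u(x_0)$; replacing $\bar u(x_0)$ by $\bar u(x)$ costs an error of order $|x-x_0|^\alpha$ against the logarithmic factor, which is still $C^\alpha$. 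The second piece, being $O(|x-x_0|^\alpha)\delta^s\chi_\Omega$, likewise contributes only to $h$.

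Third, I would show that the remainder $h$ is globally $C^\alpha$ by invoking the weighted seminorm hypothesis $[u/\delta^s]_{C^\beta(\{\delta\ge\rho\})}\le C\rho^{\alpha-\beta}$ for $\beta\in[\alpha,s+\alpha]$: this is precisely the scaling needed to bound $(-\triangle)^{s/2}u$ in $C^\alpha$ on dyadic shells $\{\delta\sim\rho\}$ and to have the estimates close uniformly as $\rho\to 0$. The main obstacle is exactly this cancellation: each of the two quantities $(-\triangle)^{s/2}(\bar u\,\delta^s\chi_\Omega)$ and $c_1\bigl(\log^-\delta+c_2\chi_\Omega\bigr)\bar u$ diverges logarithmically at $\partial\Omega$, and only after tracking their difference with borderline precision does the remainder stay in $C^\alpha(R^n)$ rather than merely $L^\infty$. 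The weighted Hölder estimate is what forces the cancellation to close at every dyadic scale, while the precise constants $c_1,c_2$ are dictated by the trigonometric identities arising in the Fourier inversion of $|\xi|^s\widehat{t_+^s}$.
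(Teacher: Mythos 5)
First, a point of orientation: this paper never proves Proposition 2.1 --- it is quoted, with the citation \cite{RS}, from Ros-Oton and Serra, and no argument for it appears anywhere in the text. So there is no ``paper's own proof'' to compare against; what you have written is in effect a reconstruction of the strategy of \cite{RS} (their analogue of this proposition, which rests on an explicit one-dimensional computation for the profile $t_+^s$, a freezing of the coefficient $\bar u(x_0)$ at each boundary point, and the weighted interior seminorm bounds of hypothesis 1 to control the remainder). Your outline identifies that architecture correctly, including the origin of the constants $c_1,c_2$ in the Fourier analysis of the one-dimensional model.

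As a proof, however, two steps would not go through as written. (1) The profile $t_+^s$ does not belong to $\mathcal{L}_{s/2}(R)$: in the singular integral defining $(-\partial_{tt})^{s/2}(t_+^s)(t)$ the integrand behaves like $-1/y$ as $y\to+\infty$, so the pointwise operator is not defined on this function, and on the Fourier side $|\xi|^{s}\,\widehat{t_+^s}$ is homogeneous of degree $-1$, hence determined only modulo multiples of $\delta_0$, i.e.\ the inverse transform is determined only modulo additive constants. The exact identity (which carries $\log|t|$, not $\log^-|t|$; the discrepancy $\log^{+}|t|$ is Lipschitz but unbounded) therefore cannot simply be ``upgraded'' to a pointwise statement with a harmless $\gamma_0$: one must truncate the profile, apply the genuine pointwise operator to the truncation, and show the truncation error is locally Lipschitz, uniformly in the base point. (2) ``Boundary flattening via charts'' is a local technique, while $(-\triangle)^{s/2}$ is nonlocal: conjugating by a diffeomorphism does not return the operator plus a controlled error, so the chart-based localization has to be replaced by a direct comparison of $\delta(x)^s$ with the one-dimensional profile along the ray through $x_0$ (using that $\delta$ is $C^{1,1}$ near $\partial\Omega$), together with a separate estimate of the contribution of the far-away part of $\Omega$. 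Finally, the assertion that $(-\triangle)^{s/2}\big[(\bar u-\bar u(x_0))\,\delta^s\chi_\Omega\big]$ is $C^\alpha$ near $x_0$ is the technical heart of the proposition; it is exactly where the dyadic estimates of hypothesis 1 must be summed, and in the proposal it is asserted rather than proved. In short: the plan is the right one, but it is an outline of the argument of \cite{RS} with its hardest estimates left open.
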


\begin{pro}\label{pro2}
Let $A$, $B$, $C$ and $D$ be real numbers, and
\begin{eqnarray*}
\phi(t)=A\log^-|t-1|+B\chi_{[0,1]}(t)+h_0(t),\\
\psi(t)=C\log^-|t-1|+D\chi_{[0,1]}(t)+h_1(t),
\end{eqnarray*}
where $\log^-t=\min\{\log t,0\}$, $h_0$ and $h_1$ are functions satisfying, for some constants $\alpha$ and $\gamma$ in $(0,1)$, and $C_0>0$, the following conditions:

   (i) $\|h\|_{C^\alpha([0,\infty))}\leq C_0$.

   (ii) For all $\beta\in[\gamma,1+\gamma]$
\begin{equation*}
\|h\|_{C^\beta((0,1-\rho)\cup(1+\rho,2))}\leq C_0\rho^{-\beta} \,\, for \,\,all\,\, \rho\in(0,1).
\end{equation*}

   (iii) $|h^{'}(t)|\leq C_0t{-2-\gamma}$ and $|h^{''}(t)|\leq C_0t{-3-\gamma}$ for all $t>2$.
Then,
\begin{eqnarray*}
-\frac{d}{d\lambda}\bigg|_{\lambda=1^+}\int_0^\infty\phi(\lambda t)\psi(\frac{t}{\lambda})dt&=&-AC\pi^2-BD
-\int_0^1[th_0'(t)h_1(t)-th_0(t)h_1'(t)]dt\\
&&-\int_1^\infty[th_0'(t)h_1(t)-th_0(t)h_1'(t)]dt.
\end{eqnarray*}
\end{pro}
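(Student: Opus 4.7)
The plan is to substitute the three-term decompositions of $\phi$ and $\psi$ directly into
$$I(\lambda) := \int_0^\infty \phi(\lambda t)\psi(t/\lambda)\,dt,$$
expanding the integrand into nine products and analyzing the right one-sided derivative $-\frac{d}{d\lambda}\big|_{\lambda = 1^+} I(\lambda)$ piece by piece. It will be convenient to pass to logarithmic variables $t = e^r$ with $\tau = \log\lambda$: the scalings $t \mapsto \lambda t$ and $t \mapsto t/\lambda$ become the translations $r \mapsto r \pm \tau$, so the two singular loci $\lambda t = 1$ and $t/\lambda = 1$ sit at $r = \mp\tau$ and separate linearly as $\tau$ departs from $0$, and $-\frac{d}{d\lambda}\big|_{\lambda=1^+}$ becomes $-\frac{d}{d\tau}\big|_{\tau=0^+}$.

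For the smooth-smooth product $\int h_0(\lambda t)h_1(t/\lambda)\,dt$, my plan is to differentiate under the integral on each of the sets $(0, 1-\rho)$, $(1+\rho, 2)$, and $(2, \infty)$ and then send $\rho \to 0^+$. Hypothesis (ii) supplies the $C^{1+\gamma}$ estimates on $h_0, h_1$ that control the derivatives uniformly away from $t = 1$; hypothesis (iii) controls the tails; together with (i) these justify the interchange. Since
$$\frac{d}{d\lambda}\bigl[h_0(\lambda t)h_1(t/\lambda)\bigr]\Big|_{\lambda=1} = t h_0'(t)h_1(t) - t h_0(t)h_1'(t),$$
this piece produces exactly the two $h$-integrals on the right-hand side with the stated sign.

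The decisive nontrivial contribution is the log-log product. Since $\log^-|e^{r \pm \tau} - 1| = \log|r \pm \tau|$ plus a Hölder-continuous remainder near $r = 0$, localizing near the singularity reduces this piece to computing the one-sided derivative at $\tau = 0^+$ of a regularized integral of the form
$$\int_{-\delta}^{\delta}\log|r+\tau|\,\log|r-\tau|\,dr.$$
A direct calculation---by substituting $r = \tau s$, using the distributional identity $\tfrac{d}{dx}\log|x| = \mathrm{p.v.}\,1/x$, and tracking the finite residue as the two log singularities coalesce---yields the universal constant $\pi^2$. This produces the $-AC\pi^2$ term in the conclusion.

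For the jump pieces, the product $\chi_{[0,1]}(\lambda t)\chi_{[0,1]}(t/\lambda) = \chi_{[0,\min(\lambda,1/\lambda)]}(t)$ integrates to $\min(\lambda,1/\lambda)$, while the log-$\chi$ and $\chi$-log cross products reduce by the substitutions $u = \lambda t$ and $v = t/\lambda$ to explicit integrals $\tfrac{AD}{\lambda}\int_0^{\lambda^2}\log^-|u-1|\,du$ and $BC\lambda\int_0^{1/\lambda^2}\log^-|v-1|\,dv$. After cancellation against the divergent residue of the log-log piece, the net jump contribution is $-BD$. The main technical obstacle is precisely this cancellation: the log-log, log-$\chi$, and $\chi$-log contributions are each individually singular at $\lambda = 1^+$, so one must regularize by excising $\{|t-1| < \rho\}$, take the one-sided $\lambda$-derivative, and only then send $\rho \to 0^+$, verifying that all divergent pieces cancel in matched pairs so that the finite residues sum exactly to $-AC\pi^2 - BD$.
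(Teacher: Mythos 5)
Your high-level strategy matches the paper's (and, ultimately, Ros-Oton and Serra's): expand $\phi(\lambda t)\psi(t/\lambda)$ according to the three-term decompositions and compute the one-sided derivative piece by piece, which the paper carries out in Lemmas 2.3--2.6. The logarithmic change of variables and the principal-value computation of the log-log contribution are reasonable alternatives to the paper's route, which simply cites Lemma 4.1 of \cite{RS}. But your account of the jump pieces is confused. The product $\chi_{[0,1]}(\lambda t)\chi_{[0,1]}(t/\lambda)$ integrates to $1/\lambda$ for $\lambda>1$, which is smooth in $\lambda$, and its derivative $-1$ alone produces the $BD$-term; no cancellation with the log-log piece occurs or is needed, since the log-log piece has its own finite one-sided derivative $-\pi^2$. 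The divergent intermediate expressions you should actually worry about are the two $\log^-\!\cdot\chi$ cross pieces you reduce to $\tfrac{AD}{\lambda}\int_0^{\lambda^2}\log^-|u-1|\,du$ and $BC\lambda\int_0^{1/\lambda^2}\log^-|v-1|\,dv$: each has a one-sided derivative blowing up like $\pm 2\log(\lambda-1)$, and these cancel \emph{against each other} (this is precisely the content of the paper's Lemma 2.6, stated with unit coefficients); note the cancellation requires $AD=BC$, which indeed holds in the paper's application since $(A,B)$ and $(C,D)$ are proportional with the same ratio $c_2$.

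A second, more substantial omission: you announce ``nine products'' but analyze only four of the nine types. The four cross pieces pairing one of $h_0$, $h_1$ with a singular factor --- namely $A\log^-|\lambda t-1|\,h_1(t/\lambda)$, $B\chi_{[0,1]}(\lambda t)\,h_1(t/\lambda)$, $C\,h_0(\lambda t)\log^-|t/\lambda-1|$, and $D\,h_0(\lambda t)\chi_{[0,1]}(t/\lambda)$ --- have integrands with a logarithmic singularity at $t=1$ and require a separate argument (using hypotheses (i)--(iii) on $h_0,h_1$) to show they contribute zero to the one-sided derivative. The paper's proof of Proposition 2.2 is itself terse on this point, invoking only Lemmas 2.4--2.6 and ``letting $\varepsilon\to 0$,'' but the step is nontrivial and must appear in a complete proof.
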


To prove the Proposition \ref{pro2}, we need the following lemmas.

\begin{lem}\label{lem1}
Let $h_0$, $h_1$ be functions satisfying (i), (ii) and (iii) in Proposition $\ref{pro2}$, $\lambda\in(1,3/2)$ and $\varepsilon\in(0,1)$ such that $\varepsilon/2>\lambda-1$. Let $\alpha$, $\gamma$ and $C_0$ be the constants appearing in (i)-(ii)-(iii). Then,
\begin{eqnarray*} |h_0(\lambda t)h_1(\frac{t}{\lambda})-h_0(t)h_1(t)|\leq\left\{
\begin{aligned}
&C|\lambda-1|^\alpha,&&t\in(1-\varepsilon,1+\varepsilon),\\
&C\rho^{-1-\gamma}|\lambda-1|^{\gamma+1}+|\phi'(1)(\lambda-1)|,&&t\in(0,1-\varepsilon)\cup(1+\varepsilon,2),\\
&C|\lambda-1|^2t^{-1-\gamma}+|\phi'(1)(\lambda-1)|,&&t\in (2,\infty),
 \end{aligned}
 \right.
\end{eqnarray*}
where the constant $C$ depends only on $C_0$ and $\phi'(1)=th_0'(t)h_1(t)-th_0(t)h_1'(t)$.
\end{lem}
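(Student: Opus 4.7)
The plan is to handle the three regions separately, each governed by a different one of the hypotheses (i)--(iii). A common starting step in all three cases is the algebraic identity
$$h_0(\lambda t)h_1(t/\lambda) - h_0(t)h_1(t) = [h_0(\lambda t)-h_0(t)]\,h_1(t/\lambda) + h_0(t)\,[h_1(t/\lambda)-h_1(t)],$$
together with the elementary expansion $1/\lambda - 1 = -(\lambda-1) + O((\lambda-1)^2)$, which reduces matters to estimating single-factor increments of $h_0$ and $h_1$.

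For the near-singularity region $t\in(1-\varepsilon,1+\varepsilon)$ I would simply apply the global H\"older bound (i). Since $t$ is in a bounded set, $|\lambda t - t|\le C|\lambda-1|$ and $|t/\lambda - t|\le C|\lambda-1|$, so (i) gives $|h_i(\lambda t)-h_i(t)|\le C_0|t(\lambda-1)|^\alpha \le C|\lambda-1|^\alpha$. Combined with the uniform $L^\infty$ bound on $h_0,h_1$ implied by (i), the identity above immediately yields the desired estimate $C|\lambda-1|^\alpha$.

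For the intermediate region $t\in(0,1-\varepsilon)\cup(1+\varepsilon,2)$ the natural tool is a first-order Taylor expansion with a H\"older remainder. I would take $\rho=\varepsilon/2$; the assumption $\varepsilon/2>\lambda-1$ keeps the segments from $t$ to $\lambda t$ and from $t$ to $t/\lambda$ inside the set $(0,1-\rho)\cup(1+\rho,2)$ on which (ii) with $\beta=1+\gamma$ supplies $\|h_i'\|_{C^\gamma}\le C_0\rho^{-1-\gamma}$. Writing
$$h_0(\lambda t) - h_0(t) = t h_0'(t)(\lambda-1) + \int_t^{\lambda t}[h_0'(s)-h_0'(t)]\,ds,$$
and controlling the last integral by $C\rho^{-1-\gamma}(t|\lambda-1|)^{1+\gamma}\le C\rho^{-1-\gamma}|\lambda-1|^{1+\gamma}$, and similarly for $h_1(t/\lambda)$ (using $1/\lambda-1 = -(\lambda-1) + O((\lambda-1)^2)$), one multiplies the two expansions. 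The first-order contribution collapses to $(\lambda-1)\bigl(th_0'(t)h_1(t)-th_0(t)h_1'(t)\bigr) = (\lambda-1)\phi'(1)$, while every residual term is controlled by $C\rho^{-1-\gamma}|\lambda-1|^{1+\gamma}$; the triangle inequality then gives the stated bound.

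For the far region $t>2$ I would run the same Taylor expansion but invoke (iii) in place of (ii). The pointwise bound $|h_i''(\tau)|\le C_0\tau^{-3-\gamma}$ yields a quadratic Taylor remainder of size $C(t|\lambda-1|)^2\cdot t^{-3-\gamma} = C|\lambda-1|^2 t^{-1-\gamma}$, while $|h_i'(\tau)|\le C_0\tau^{-2-\gamma}$ together with the $L^\infty$ bound on $h_0,h_1$ from (i) controls the cross terms in the product so that the first-order part is again $(\lambda-1)\phi'(1)$ plus an error of order $|\lambda-1|^2 t^{-1-\gamma}$. The main obstacle, and the step requiring the most care, is the bookkeeping in the intermediate region: one must verify that the Taylor segments do not leave the subdomain where (ii) supplies a $C^{1+\gamma}$ bound (which is precisely why $\rho=\varepsilon/2$ is the right scale under the hypothesis $\varepsilon/2>\lambda-1$), and check that the second-order discrepancy between $-(\lambda-1)$ and $1/\lambda-1$ does not contaminate the first-order cancellation that produces the principal term $\phi'(1)(\lambda-1)$.
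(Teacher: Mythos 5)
Your three-region structure and the treatment of $t\in(1-\varepsilon,1+\varepsilon)$ match the paper exactly. For the intermediate and far regions, however, the paper takes a slightly different route: rather than Taylor-expanding $h_0(\lambda t)$ and $h_1(t/\lambda)$ separately and multiplying, it sets $\phi(\mu)=h_0(\mu t)h_1(t/\mu)-h_0(t)h_1(t)$, applies the mean value theorem to obtain $\phi(\lambda)=\phi'(\mu)(\lambda-1)$ for some $\mu\in(1,\lambda)$, and then writes $\phi'(\mu)=\phi'(1)+[\phi'(\mu)-\phi'(1)]$, estimating the bracket directly via the H\"older bounds on $h_i'$ supplied by (ii) (or the derivative decay in (iii) for $t>2$). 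This one-variable MVT device sidesteps the cross-term bookkeeping and the $1/\lambda$-versus-$\lambda$ discrepancy that your product-of-expansions approach has to absorb; both roads lead to the same principal term $\phi'(1)(\lambda-1)$.

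The one substantive divergence worth flagging is your choice $\rho=\varepsilon/2$. In the paper $\rho$ is \emph{not} the uniform scale $\varepsilon/2$ but the $t$-dependent (and $\mu$-dependent) quantity
$$\rho=\min\{|\mu t-1|,\,|t-1|,\,|t/\mu-1|\},$$
which is comparable to $\varepsilon$ only near the endpoints $t=1\mp\varepsilon$ and is $O(1)$ when $t$ is far from $1$. This sharper, pointwise form is exactly what the subsequent Lemma 2.4 consumes: there one integrates $\rho^{-1-\gamma}$ over $(0,1-\varepsilon)$, and the paper computes $\int_0^{1-\varepsilon}(1-\mu t)^{-1-\gamma}\,dt=O(\varepsilon^{-\gamma})$, which, combined with the factor $(\lambda-1)^\gamma$ and the choice $\varepsilon=(\lambda-1)^\theta$, tends to $0$ for any $\theta\in(0,1)$. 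With your constant $\rho=\varepsilon/2$ the same integral is $O(\varepsilon^{-1-\gamma})$, and the resulting bound $(\lambda-1)^{\gamma-\theta(1+\gamma)}$ vanishes only if $\theta<\gamma/(1+\gamma)$, which conflicts with the other constraint $\alpha+\theta>1$ unless $\alpha>1/(1+\gamma)$. So although your estimate is a true inequality and the lemma statement itself leaves $\rho$ undefined, your reading of $\rho$ yields a strictly weaker conclusion than the paper intends and would not carry the argument of the next lemma without extra hypotheses. To match the paper you should keep $\rho$ as the variable minimum above (it is still bounded below by $\varepsilon/2$ on the region in question, so all your segment-containment checks survive unchanged).
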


\begin{proof} For $t\in(1-\varepsilon,1+\varepsilon)$, from the fact that $\|h_0\|_{C^\alpha([0,\infty))}$, $\|h_1\|_{C^\alpha([0,\infty))}\leq C_0$ and $\|h_0\|_{L^\infty}(R^n)$, $\|h_1\|_{L^\infty}(R^n)\leq C_0$, we obtain
\begin{eqnarray*}
&&|h_0(\lambda t)h_1(\frac{t}{\lambda})-h_0(t)h_1(t)|\\
&=&|h_0(\lambda t)h_1(\frac{t}{\lambda})-h_0(t)h_1(\frac{t}{\lambda})+h_0(t)h_1(\frac{t}{\lambda})-h_0(t)h_1(t)|\\
&=&|h_0(\lambda t)-h_0(t)||h_1(\frac{t}{\lambda})|+|h_0(t)||(h_1(\frac{t}{\lambda})-h_1(t))|\\
&\leq&C|\lambda t-t|^\alpha+C|\frac{t}{\lambda}-t|^\alpha\\
&\leq&C|\lambda-1|^\alpha,
\end{eqnarray*}

For $t\in (0,1-\varepsilon)\cap(1+\varepsilon,2)$ and $\mu\in[1,\lambda]$, defined
\begin{equation*}
\phi(\mu)=h_0(\mu t)h_1(\frac{t}{\mu})-h_0(t)h_1(t)
\end{equation*}
By the mean value theorem,
 $$\phi(\lambda)=\phi(1)+\phi^{'}(\mu)(\lambda-1)
 \,\,for\,\, some\,\, \mu\in(1,\lambda).$$
Obviously $\phi(1)=0$. Note that
\begin{equation*}
\phi^{'}(\mu)=th_0^{'}(\mu t)h_1(\frac{t}{\mu})-\frac{t}{\mu^2}h_0(\mu t)h_1^{'}(\frac{t}{\mu}),
\end{equation*}
then
\begin{eqnarray}\label{1.8}\nonumber
|\phi^{'}(\mu)|&=&|\phi^{'}(\mu)-\phi^{'}(1)+\phi^{'}(1)|\\
&\leq&|\phi^{'}(\mu)-\phi^{'}(1)|+|\phi^{'}(1)|.
\end{eqnarray}
Thus, using the bounds from (ii) with $\beta$ replaced by $\gamma$, $1$ and $1+\gamma$,
\begin{eqnarray}\label{1.9}\nonumber
&&|\phi^{'}(\mu)-\phi^{'}(1)|\\ \nonumber
&=&|th_0^{'}(\mu t)h_1(\frac{t}{\mu})-\frac{t}{\mu^2}h_0(\mu t)h_1^{'}(\frac{t}{\mu})-th_0^{'}(t)h_1(t)-th_0(t)h_1^{'}(t)|\\ \nonumber
&=&t|h_0^{'}(\mu t)-h_0^{'}(t)||h_1^{'}(\frac{t}{\mu})|+t|h_0^{'}(t)||h_1^(\frac{t}{\mu})-h_1(t)|+t|h_1^{'}(t)-h_1{'}(\frac{t}{\mu})|
\frac{|h_0(\mu t)|}{\mu^2}\\nonumber
&&+th_1^{'}(t)|h_0(t)-\frac{h_0(\mu t)}{\mu^2}|\\ \nonumber
&\leq&tC\rho^{-1-\gamma}|\mu t-t|^\gamma+Ct\rho^{-1-\gamma}|\frac{t}{\mu}-t|^\gamma+Ct|t-\frac{t}{\mu}|^\gamma \rho^{-1-\gamma}+Ct\rho^{-1}[|t-\mu t|^\gamma\rho^{-\gamma}+C]\\
&=&C\rho^{-1-\rho}|\mu-1|^\gamma,
\end{eqnarray}
where $\rho=\min\{|\mu t-1|,|t-1|,|\frac{t}{\mu}-1|\}$, then
\begin{equation*}
|\phi^{'}(\mu)|\leq C\rho^{-1-\rho}|\mu-1|^\gamma+|\phi^{'}(1)|.
\end{equation*}

Finally, for $t\in(2,\infty)$, with a similar argument as in (\ref{1.9}) and using the bound (iii) instead of (ii), we obtain
\begin{equation*}
|\phi^{'}(\mu)|\leq Ct^{-1-\rho}|\lambda-1|^2+|\phi^{'}(1)|.
\end{equation*}
This completes the proof.
\end{proof}

\begin{lem}\label{lem2.4}
Let $h_0$ and $h_1$ be the function satisfying (i), (ii) and (iii), then
\begin{eqnarray*}
&&\frac{d}{d\lambda}\bigg|_{\lambda\rightarrow 1^+}\int_0^\infty h_0(\lambda t)h_1(\frac{t}{\lambda})dt\\
&=&\int_0^{1}th_0'(t)h_1(t)-th_0(t)h_1'(t)dt
+\int^\infty_{1}th_0'(t)h_1(t)-th_0(t)h_1'(t)dt.
\end{eqnarray*}
\end{lem}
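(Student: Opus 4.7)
The plan is to analyze the difference quotient
$$Q(\lambda) := \frac{1}{\lambda-1}\int_0^\infty \big[h_0(\lambda t)h_1(t/\lambda) - h_0(t)h_1(t)\big]\,dt$$
as $\lambda\to 1^+$, by applying Lemma~\ref{lem1} after splitting the $t$-domain. With an auxiliary parameter $\varepsilon=\varepsilon(\lambda)\to 0$ (to be chosen), I would decompose $Q = Q_A + Q_B + Q_C$ where the three pieces integrate over $A=(1-\varepsilon,1+\varepsilon)$, $B=(0,1-\varepsilon)\cup(1+\varepsilon,2)$ and $C=(2,\infty)$, and treat each separately using the three bounds provided by Lemma~\ref{lem1}.

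Quantitatively, those bounds yield
$$|Q_A| \le 2C\varepsilon|\lambda-1|^{\alpha-1}, \quad |Q_B| \le C|\lambda-1|^\gamma\varepsilon^{-\gamma} + \int_B |\phi'(1)|\,dt, \quad |Q_C|\le C|\lambda-1| + \int_C|\phi'(1)|\,dt,$$
where $\phi'(1)=t h_0'(t)h_1(t)-t h_0(t)h_1'(t)$ is the pointwise derivative of $\mu\mapsto h_0(\mu t)h_1(t/\mu)$ at $\mu=1$. The strategy is for the pure-error first terms in each bound to vanish as $\lambda\to 1^+$, while the $\phi'(1)$ contributions persist and yield the answer via dominated convergence (the difference quotient tends pointwise to $\phi'(1)$ for $t\ne 1$ because $h_0,h_1$ are differentiable off $\{1\}$).

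The main obstacle is the coordinated choice of $\varepsilon(\lambda)$: I need both $\varepsilon|\lambda-1|^{\alpha-1}\to 0$ and $|\lambda-1|^\gamma\varepsilon^{-\gamma}\to 0$, which is achieved by any $\varepsilon=|\lambda-1|^\beta$ with $\beta\in(1-\alpha,1)$; for concreteness I would take $\beta=1-\alpha/2$, so that the errors scale like $|\lambda-1|^{\alpha/2}$ and $|\lambda-1|^{\gamma\alpha/2}$ respectively. With this choice, the non-$\phi'(1)$ terms die off, and dominated convergence on $B$ and $C$ (using $|\phi'(1)|$ itself as the dominating function on each) produces $Q_B\to\int_B\phi'(1)\,dt$ and $Q_C\to\int_C\phi'(1)\,dt$ while $Q_A\to 0$.

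Letting $\varepsilon\to 0$ (enforced automatically by $\varepsilon(\lambda)\to 0$), the combined limit is exactly the improper integral $\int_0^1 \phi'(1)\,dt+\int_1^\infty \phi'(1)\,dt$ of the statement. A secondary subtlety I anticipate is checking that these improper integrals actually converge at $t=1$: condition~(ii) only yields $|h_0'|,|h_1'|\lesssim 1/|t-1|$, so individually $t h_0'(t)h_1(t)$ and $t h_0(t)h_1'(t)$ are borderline non-integrable, and the convergence must come from the Wronskian-type cancellation in $\phi'(1)=t[h_0'h_1 - h_0 h_1']$ rather than from absolute integrability. This should follow by combining the two terms before estimating, but it is the delicate point to verify.
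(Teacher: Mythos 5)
Your decomposition at $1\pm\varepsilon$ and at $2$, the choice $\varepsilon=|\lambda-1|^\beta$ with $\beta\in(1-\alpha,1)$, and the use of Lemma~\ref{lem1} to control each piece are exactly the paper's strategy, so the plan is sound. One step as you have written it does not quite close, though: the bound $|Q_B|\le C|\lambda-1|^\gamma\varepsilon^{-\gamma}+\int_B|\phi'(1)|\,dt$ controls only the magnitude of $Q_B$, not its distance to $\int_B\phi'(1)\,dt$, and the second term is genuinely dangerous since $|\phi'(1)|\lesssim|t-1|^{-1}$ near $t=1$ makes $\int_B|\phi'(1)|\,dt$ grow like $\log(1/\varepsilon)$; a straight appeal to dominated convergence with $|\phi'(1)|$ as majorant therefore does not go through on the $\lambda$-dependent domain $B$. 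The paper instead uses the mean value theorem inside the difference quotient, writing it exactly as $\phi'(\mu)$ for some $\mu\in(1,\lambda)$ and then estimating $|\phi'(\mu)-\phi'(1)|\le C\rho^{-1-\gamma}|\mu-1|^\gamma$ as in (\ref{1.9}); this yields the sharper statement $\bigl|Q_B-\int_B\phi'(1)\,dt\bigr|\le C|\lambda-1|^\gamma\varepsilon^{-\gamma}\to0$, which is what is actually needed. You correctly flag the delicate convergence of the resulting improper integral at $t=1$ as the remaining subtlety; the paper is equally terse on that point, so with the MVT refinement your argument matches the paper's.
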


\begin{proof}
Direct computations yield
\begin{eqnarray*}
&&\frac{d}{d\lambda}\bigg|_{\lambda\rightarrow1^+}\int_0^{1-\varepsilon}h_0(\lambda t)h_1(\frac{t}{\lambda})dt\\
&=&\int_0^{1-\varepsilon}\frac{d}{d\lambda}\bigg|_{\lambda\rightarrow1^+}h_0(\lambda t)h_1(\frac{t}{\lambda})dt\\
&=&\int_0^{1-\varepsilon}\lim_{\lambda\rightarrow1^+}\frac{h_0(\lambda t)h_1(\frac{t}{\lambda})-h_0(t)h_1(t)}{\lambda-1}dt\\
&=&\int_0^{1-\varepsilon}\lim_{\lambda\rightarrow1^+}\phi^{'}(\mu)dt\\
&=&\int_0^{1-\varepsilon}\lim_{\lambda\rightarrow1^+}[\phi^{'}(\mu)-\phi^{'}(1)+\phi^{'}(1)]dt\\
&=&\int_0^{1-\varepsilon}\lim_{\lambda\rightarrow1^+}[\phi^{'}(\mu)-\phi^{'}(1)]dt+
\int_0^{1-\varepsilon}\phi'(1)dt.\\
\end{eqnarray*}
Recall for $\mu\in(1,\lambda)$, $\lambda-1<\frac{\varepsilon}{2}$. Let
$$\rho=\min\{|\mu t-1|,|t-1|,|\frac{t}{\mu}-1|\}$$
For $t\in(0,1-\varepsilon)$, if $t\mu<1$, then $\rho=|1-t\mu|$; if $t\mu>1$, it is easy to see $\rho=|t\mu-1|$.
Therefore,
\begin{eqnarray*}
&&\int_0^{1-\varepsilon}\lim_{\lambda\rightarrow1^+}[\phi'(\mu)-\phi'(1)]dt\\
&\leq&\int_0^{1-\varepsilon}\lim_{\lambda\rightarrow1^+}\rho^{-1-\gamma}|\mu-1|^\gamma dt\\
&\leq&\lim_{\lambda\rightarrow1^+}{(\lambda-1)}^\gamma\int_0^{1-\varepsilon}{(1-\mu t)}^{-1-\gamma}dt\\
&=& \lim_{\lambda \rightarrow1^+}(\lambda-1)^\gamma\frac{(1-\mu t)^{-\gamma}}{\gamma \mu}\bigg|_0^{1-\varepsilon}\\
&=&\lim_{\lambda\rightarrow1^+}(\lambda-1)^\gamma[(1-(1-\varepsilon)\mu)^{-\gamma}-1]\frac{1}{\gamma\mu}\\
&=&\lim_{\lambda\rightarrow1^+}\bigg[(\frac{\lambda-1}{1-(1-\varepsilon)\mu})^\gamma-\frac{(\lambda-1)^\gamma}{\gamma\mu}
\bigg]\\
&=&\lim_{\lambda\rightarrow1^+}(\frac{\lambda-1}{1-(1-\varepsilon)\mu})^\gamma\\
&\leq&\lim_{\lambda\rightarrow1^+}(\frac{1}{\mu(\lambda-1)^{\theta-1}-1})^\gamma\\
&=& 0,
\end{eqnarray*}
here we take $\varepsilon=(\lambda-1)^\theta$ and $\theta\in(0,1)$.
Then we have
\begin{equation*}
\frac{d}{d\lambda}\bigg|_{\lambda\rightarrow1^+}\int_0^{1-\varepsilon}h_0(\lambda t)h_1(\frac{t}{\lambda})dt=\int_0^{1-\varepsilon}\phi'(1)dt.
\end{equation*}
We can obtain the same conclusion for $t\in(1+\varepsilon,\infty)$ by similar argument, that is,
\begin{equation*}
\frac{d}{d\lambda}\bigg|_{\lambda\rightarrow1^+}\int^\infty_{1+\varepsilon}h_0(\lambda t)h_1(\frac{t}{\lambda})dt=\int^\infty_{1+\varepsilon}\phi'(1)dt.
\end{equation*}
For $t\in (1-\varepsilon,1+\varepsilon)$,
\begin{eqnarray*}
&&\frac{d}{d\lambda}\bigg|_{\lambda\rightarrow1^+}\int^{1+\varepsilon}_{1-\varepsilon}h_0(\lambda t)h_1(\frac{t}{\lambda})dt\\
&=&\lim_{\lambda\rightarrow1^+}\int^{1+\varepsilon}_{1-\varepsilon}\frac{h_0(\lambda t)h_1(\frac{t}{\lambda})-h_0(t)h_1(t)}{\lambda-1}dt\\
&\leq&C\lim_{\lambda\rightarrow1^+}\int^{1+\varepsilon}_{1-\varepsilon}|\lambda-1|^{\alpha-1}dt\\
&=&C\lim_{\lambda\rightarrow1^+}|\lambda-1|^{\alpha-1}\varepsilon,\\
\end{eqnarray*}
where $\varepsilon=(\lambda-1)^\theta$ and $\theta\in(0,1)$ such that $\alpha+\theta>1.$
Finally, we prove that
\begin{eqnarray*}
&&\frac{d}{d\lambda}\bigg|_{\lambda\rightarrow1^+}\int^{\infty}_0h_0(\lambda t)h_1(\frac{t}{\lambda})dt\\
&\leq&\int^\infty_{1}\phi'(1)dt+
\int_0^{1}\phi'(1)dt.
\end{eqnarray*}
\end{proof}

\begin{lem}\label{lem2.5}Let $\log^-t=\min\{\log t,0\}$, then
\begin{eqnarray}
&&\frac{d}{d\lambda}\bigg|_{\lambda=1} \int_0^\infty\log^-|\lambda t-1|\log^-|\frac{t}{\lambda}-1|dt=-\pi^2.
\end{eqnarray}
\end{lem}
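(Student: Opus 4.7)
The plan is to isolate the non-smooth dependence of the integrand on $\lambda$ in a single auxiliary function, and then compute its one-sided derivative at the singular point. Let $I(\lambda)$ denote the integral in the lemma. Substituting $s=\lambda t$ and using $\log|s/\lambda^2-1|=\log|s-\lambda^2|-2\log\lambda$ together with $\int_0^2\log|s-1|\,ds=-2$, one obtains
$$I(\lambda) = \frac{G(\lambda^2)}{\lambda}+\frac{4\log\lambda}{\lambda},\qquad G(\mu):=\int_0^2\log|s-1|\log|s-\mu|\,ds.$$
Since $G(1)=\int_{-1}^1\log^2|r|\,dr=4$, a direct differentiation at $\lambda=1$ yields $\frac{dI}{d\lambda}\big|_{\lambda=1^+}=2G'(1^+)-G(1)+4=2G'(1^+)$, so it suffices to show $G'(1^+)=-\pi^2/2$.

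Translating by $r=s-1$ gives $G(\mu)=\widetilde F(1-\mu)$, where $\widetilde F(a):=\int_{-1}^1\log|r|\log|r+a|\,dr$. The substitution $r\mapsto -r$ shows that $\widetilde F$ is even. Because the two logarithmic singularities at $r=0$ and $r=-a$ coalesce as $a\to 0$, $\widetilde F$ is not classically differentiable at $0$: it admits only finite one-sided derivatives, of opposite signs, and the chain rule plus evenness gives $G'(1^+)=\widetilde F'(0^+)$.

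To compute $\widetilde F'(0^+)$ I split $\widetilde F(a)-\widetilde F(0)$ into a near piece on $\{|r|\le a\}$ and a far piece on $\{a<|r|\le 1\}$. Rescaling $r=av$ in the near piece produces $a$ times a polynomial in $\log a$ whose constant term is $\int_{-1}^1\log|v|\log|v+1|\,dv=4-2\log 2-\pi^2/4$; this value follows from the classical dilogarithm evaluations $\int_0^1\log u\log(1-u)\,du=2-\pi^2/6$ and $\int_0^1\log u\log(1+u)\,du=2-2\log 2-\pi^2/12$. On the far piece I write $\log|r+a|=\log|r|+\log|1+a/r|$ and Taylor expand in $a/r$: odd powers integrate to zero by the symmetry of $\{a<|r|\le 1\}$, and the even-power series sums explicitly via $\sum_{m\ge 1}1/(m(2m-1))=2\log 2$ and $\sum_{m\ge 1}1/(m(2m-1)^2)=-2\log 2+\pi^2/4$. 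Combined with the elementary identity $\int_{-1}^1\log^2|r|\,dr-\int_{-a}^a\log^2|r|\,dr=4-2a\log^2 a+4a\log a-4a$, the $a\log^2 a$ and $a\log a$ terms cancel exactly, leaving
$$\widetilde F(a)-\widetilde F(0)=-\frac{\pi^2}{2}\,a+o(a)\qquad(a\to 0^+),$$
which gives $\widetilde F'(0^+)=-\pi^2/2$ and finishes the proof.

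The main obstacle is precisely this cancellation of the singular $a\log^2 a$ and $a\log a$ contributions across the near/far split: a naive differentiation under the integral sign fails, producing divergent contributions and losing the finite answer. Once the logarithmic terms annihilate, the value $-\pi^2/2$ emerges cleanly from the two $\pi^2/4$ contributions (one from the dilogarithm integrals in the near piece, the other from $\sum 2/(2m-1)^2=\pi^2/4$ in the far piece) combining with the same sign.
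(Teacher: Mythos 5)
Your proposal is correct, and the route is genuinely different from the paper's. The paper proves this lemma by leaning on Lemma~4.1 of Ros-Oton and Serra \cite{RS}, which gives the value $-\pi^2$ for the corresponding integral with $\log$ in place of $\log^-$; the remaining work in the paper consists of the truncation $\varepsilon=(\lambda-1)^\theta$ already set up in Lemma~2.4, showing that on $(0,1-\varepsilon)$ and $(1+\varepsilon,2/\lambda)$ the pointwise $\lambda$-derivative at $\lambda=1$ vanishes identically (the two logarithmic factors contribute $\frac{t}{t-1}\log|t-1|$ and $-\frac{t}{t-1}\log|t-1|$, which cancel), so the whole derivative must come from the central window and coincide with the \cite{RS} value. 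Your argument, by contrast, is self-contained: the substitution $s=\lambda t$ and the factorization $\log|s/\lambda^2-1|=\log|s-\lambda^2|-2\log\lambda$ reduce the problem to computing the right derivative at $\mu=1$ of $G(\mu)=\int_0^2\log|s-1|\log|s-\mu|\,ds$, and you evaluate this by a near/far decomposition combined with closed-form dilogarithm integrals ($\int_0^1\log u\log(1-u)\,du=2-\pi^2/6$, $\int_0^1\log u\log(1+u)\,du=2-2\log 2-\pi^2/12$) and the series $\sum 1/(m(2m-1))=2\log 2$, $\sum 1/(m(2m-1)^2)=\pi^2/4-2\log 2$. I checked the cancellations: the $a\log^2 a$ terms ($-2$ from the far piece, $+2$ from the near piece), the $a\log a$ terms ($(4-2\log 2)+(2\log 2-4)=0$), and the pure $a$ terms ($-4+2\log 2$ versus $4-2\log 2$) all annihilate, leaving $\widetilde F(a)=4-\tfrac{\pi^2}{2}a+o(a)$ as claimed, so $G'(1^+)=-\pi^2/2$ and $I'(1^+)=2G'(1^+)=-\pi^2$. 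What the paper's approach buys is brevity by offloading the hard computation to \cite{RS}; what yours buys is a fully explicit, reference-free derivation, and it also makes transparent exactly where the $-\pi^2$ comes from — two equal $\pi^2/4$ contributions, one from the coalescing-singularity (near) integral and one from $\sum 2/(2m-1)^2=\pi^2/4$ in the far expansion. The only place you should add a word of justification for publication-level rigor is the term-by-term integration of the Taylor series $\log|1+a/r|=\sum_{k\ge1}(-1)^{k+1}(a/r)^k/k$ over $\{a<|r|\le1\}$: at $|r|$ close to $a$ the ratio $|a/r|$ approaches $1$ and the series converges only conditionally, so one should either integrate over $\{(1+\eta)a<|r|\le1\}$ and pass $\eta\to0$ at the end, or invoke Abel summation; this is routine but worth stating.
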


\begin{proof}Let $\varepsilon=(\lambda-1)^\theta$ as given in Lemma \ref{lem2.4}, for $t\in(0,1-\varepsilon)$,
\begin{eqnarray*}
&&\frac{d}{d\lambda}\bigg|_{\lambda=1} \int_0^{1-\varepsilon}\log^-|\lambda t-1|\log^-|\frac{t}{\lambda}-1|dt\\
&=&\int_0^{1-\varepsilon}\frac{d}{d\lambda}\bigg|_{\lambda=1}\{\log^-|\lambda t-1|\log^-|\frac{t}{\lambda}-1|\}dt\\
&=&\int_0^{1-\varepsilon}\bigg[\frac{t}{|t-1|}\log^-|t-1|-\frac{t}{|t-1|}\log^-|t-1|]dt\\
&=&0.
\end{eqnarray*}
For $t\in (1+\varepsilon,\frac{2}{\lambda})$, through the same argument as before, we derive that
\begin{equation*}
\frac{d}{d\lambda}\bigg|_{\lambda=1} \int^{\frac{2}{\lambda}}_{1+\varepsilon}\log^-|\lambda t-1|\log^-|\frac{t}{\lambda}-1|dt=0.
\end{equation*}
From Lemma 4.1 in $\cite{RS}$, we have
\begin{eqnarray*}
 &&\frac{d}{d\lambda}\bigg|_{\lambda=1} \int_0^\infty\log|\lambda t-1|\log|\frac{t}{\lambda-1}|dt\\
 &=&\frac{d}{d\lambda}\bigg|_{\lambda=1} \int_0^\frac{2}{\lambda}\log|\lambda t-1|\log|\frac{t}{\lambda-1}|dt\\
 &=&-\pi^2.
\end{eqnarray*}
Then
\begin{eqnarray*}
  &&\frac{d}{d\lambda}\bigg|_{\lambda=1} \int_{1-\varepsilon}^{1+\varepsilon}\log|\lambda t-1|\log|\frac{t}{\lambda-1}|dt\\
  &=&\frac{d}{d\lambda}\bigg|_{\lambda=1}\bigg\{\int_0^\infty\log|\lambda t-1|\log|\frac{t}{\lambda-1}|dt\\
  &&-\int_0^{1-\varepsilon}\log^-|\lambda t-1|\log^-|\frac{t}{\lambda}-1|dt- \int^{\frac{2}{\lambda}}_{1+\varepsilon}\log^-|\lambda t-1|\log^-|\frac{t}{\lambda}-1|\bigg\}dt\\
  &=&-\pi^2.
\end{eqnarray*}
\end{proof}

\begin{lem}\label{lem2.6}Let $\log^-t=\min\{\log t,0\}$ and $\chi$ be the characteristic function, then
\begin{eqnarray}\nonumber
&&\frac{d}{d\lambda}\bigg|_{\lambda=1} \int_0^\infty\log^-|\lambda t-1|\chi_{[0,1]}(\frac{t}{\lambda})+\log^-|\frac{t}{\lambda}-1|\chi_{[0,1]}(\lambda t)dt=0,
\end{eqnarray}
and
\begin{eqnarray}\nonumber
&&\frac{d}{d\lambda}\bigg|_{\lambda=1} \int_0^\infty \chi_{[0,1]}(\lambda t)\chi_{[0,1]}(\frac{t}{\lambda})dt=-1.
\end{eqnarray}
\end{lem}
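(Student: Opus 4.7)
I would prove the two identities separately, starting with the second, which is essentially an area computation. Since $\chi_{[0,1]}(\lambda t)$ and $\chi_{[0,1]}(t/\lambda)$ are the indicator functions of $[0,1/\lambda]$ and $[0,\lambda]$ respectively, their product equals the indicator of $[0,\min(\lambda,1/\lambda)]$, so
\[
\int_0^\infty\chi_{[0,1]}(\lambda t)\,\chi_{[0,1]}(t/\lambda)\,dt=\min(\lambda,1/\lambda).
\]
For $\lambda>1$ this is $1/\lambda$, whose right derivative at $\lambda=1$ equals $-1$, establishing the second claim.

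For the first identity my strategy is to localize each summand using the supports of its factors, then reduce everything to a single one-variable integral. The factor $\chi_{[0,1]}(t/\lambda)$ restricts $t\le\lambda$, and $\log^-|\lambda t-1|$ vanishes once $\lambda t\ge 2$, so for $\lambda\in(0,\sqrt 2)$ the first summand is supported on $(0,\lambda)$; the substitution $s=\lambda t$ then turns it into $F(\lambda^2)/\lambda$, where
\[
F(x):=\int_0^x\log^-|s-1|\,ds.
\]
The symmetric substitution $s=t/\lambda$ handles the second summand, giving $\lambda\,F(1/\lambda^2)$. Therefore, for $\lambda$ in a neighborhood of $1$,
\[
J(\lambda)\;=\;\frac{F(\lambda^2)}{\lambda}+\lambda\,F(1/\lambda^2).
\]

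The main obstacle is that $F'(x)=\log|x-1|$ is singular at $x=1$, so naively differentiating under the integral sign produces a $0\cdot\infty$ indeterminacy. I would bypass this by computing $F$ in closed form near $1$ via integration by parts: for $\mu>1$ close to $1$ one obtains $F(\mu)=-\mu+(\mu-1)\log(\mu-1)$, and an analogous formula expresses $F(1/\mu)$ in terms of $\log(\mu-1)$ and $\log\mu$. Plugging these into the displayed expression for $J$ with $\mu=\lambda^2$, the singular $(\mu-1)\log(\mu-1)/\sqrt\mu$ contributions from the two terms cancel identically, leaving the smooth closed form
\[
J(\lambda)\;=\;-\lambda-\frac{1}{\lambda}+2\Bigl(\lambda-\frac{1}{\lambda}\Bigr)\log\lambda,
\]
from which $J'(1)=0$ follows by routine differentiation. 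As a sanity check, the manifest symmetry $J(\lambda)=J(1/\lambda)$, visible from swapping $\lambda\leftrightarrow 1/\lambda$ in the original integrand, is consistent with this vanishing and in fact forces it once the smoothness of the closed form is known.
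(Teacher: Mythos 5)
Your proof is correct, and it takes a genuinely different and cleaner route than the paper's. For the second identity both arguments amount to noticing that the integral equals $\min(\lambda,1/\lambda)$ and differentiating from the right, so there is no real difference there. For the first identity, however, the paper splits the domain at $1\pm\varepsilon$ with the moving cutoff $\varepsilon=(\lambda-1)^\theta$, handles the inner strip by a change of variables and integration by parts, and argues that the singular boundary contributions from the two summands cancel (the intermediate lines in the paper even contain formal ``$\pm2\log 0$'' terms which are only meaningful after the cancellation). You instead exploit the support restrictions to write the whole quantity as $J(\lambda)=F(\lambda^2)/\lambda+\lambda F(1/\lambda^2)$ with $F(x)=\int_0^x\log^-|s-1|\,ds$, compute $F$ exactly near $1$, observe that the $(\mu-1)\log(\mu-1)$ singular pieces cancel algebraically, and differentiate the resulting smooth expression $J(\lambda)=-\lambda-1/\lambda+2(\lambda-1/\lambda)\log\lambda$. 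This buys you a global closed form (hence two-sided differentiability at $\lambda=1$, not just the right derivative), a transparent reason for the cancellation, and an independent sanity check via the symmetry $J(\lambda)=J(1/\lambda)$, at the modest cost of having to compute $F$ explicitly; the paper's cutoff argument is the one that generalizes to the $h_0,h_1$ terms of Lemma 2.4 where no closed form is available, which is presumably why the authors use it uniformly.
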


\begin{proof} Let $\varepsilon=(\lambda-1)^\theta$  as given in Lemma \ref{lem2.4}, for $t\in(1-\varepsilon,1+\varepsilon),$
\begin{eqnarray}\nonumber
&&\frac{d}{d\lambda}\bigg|_{\lambda=1} \int_{1-\varepsilon}^{1+\varepsilon}\log^-|\lambda t-1|\chi_{[0,1]}(\frac{t}{\lambda})dt\\ \nonumber
&=&\frac{d}{d\lambda}\bigg|_{\lambda=1}\int_{1-\varepsilon}^{\lambda}\log|1-\lambda t|dt\\ \nonumber
&=&\frac{d}{d\lambda}\bigg|_{\lambda=1}\frac{1}{\lambda}\int_{1-\lambda^2}^{1-\lambda(1-\varepsilon)}\log|y|dy,\quad y=1-\lambda t\\ \nonumber
&=&\frac{d}{d\lambda}\bigg|_{\lambda=1}\frac{1}{\lambda}\bigg[\log|y|y\bigg|_{1-\lambda^2}^{1-\lambda(1-\varepsilon)}-\frac{1}{|y|}
(\lambda^2-\lambda(1-\varepsilon))\bigg]\\
&=&\varepsilon+2\log 0-\log|\varepsilon|,
\end{eqnarray}
and
\begin{eqnarray}\nonumber
&&\frac{d}{d\lambda}\bigg|_{\lambda=1}\int_{1-\varepsilon}^{1+\varepsilon}\log^-|\frac{t}{\lambda}-1|\chi_{[0,1]}(\lambda t)dt\\ \nonumber
&=&\frac{d}{d\lambda}\bigg|_{\lambda=1}\int_{1-\varepsilon}^{\frac{1}{\lambda}}\log|1-\frac{t}{\lambda}|dt\\ \nonumber
&=&\frac{d}{d\lambda}\bigg|_{\lambda=1}\lambda\int_{1-\frac{1}{\lambda^2}}^{1-\frac{1-\varepsilon}{\lambda}}\log|y|dy\\ \nonumber
&=&\frac{d}{d\lambda}\bigg|_{\lambda=1}\lambda\bigg[\log|y|y\bigg|_{1-\frac{1}{\lambda^2}}^{1-\frac{1-\varepsilon}{\lambda}}-
\frac{1}{|y|}y(\frac{1}{\lambda^2}-\frac{1-\varepsilon}{\lambda})\bigg]\\
&=&\varepsilon-2\log 0+\log |\varepsilon|.
\end{eqnarray}

For $t\in[0,1-\varepsilon),$ we can exchange $\frac{d}{d\lambda}\big|_{\lambda=1}$ with the integral sign and show that
\begin{eqnarray*}
&&\frac{d}{d\lambda}\bigg|_{\lambda=1} \int^{1-\varepsilon}_0\log^-|\lambda t-1|\chi_{[0,1]}(\frac{t}{\lambda})+\log^-|\frac{t}{\lambda}-1|\chi_{[0,1]}(\lambda t)dt\\ \nonumber
&=&\int^{1-\varepsilon}_0\frac{d}{d\lambda}\bigg|_{\lambda=1}\log|1-\lambda t|+\log|1-\frac{t}{\lambda}|dt\\ \nonumber
&=&\int^{1-\varepsilon}_0\frac{d}{d\lambda}\bigg|_{\lambda=1}\log|1-\lambda t|dt\\ \nonumber
&=&\int^{1-\varepsilon}_0(\frac{-t}{1-t}+\frac{t}{1-t})dt\\ \nonumber
&=&0.
\end{eqnarray*}

For $t\in(1+\varepsilon,\infty)$, we use the same argument as for $t\in[0,1-\varepsilon)$ and obtain that
\begin{eqnarray*}
&&\frac{d}{d\lambda}\bigg|_{\lambda=1} \int_{1+\varepsilon}^\infty\log^-|\lambda t-1|\chi_{[0,1]}(\frac{t}{\lambda})+\log^-|\frac{t}{\lambda}-1|\chi_{[0,1]}(\lambda t)dt\\ \nonumber
&=&\int_{1+\varepsilon}^\infty\frac{d}{d\lambda}\bigg|_{\lambda=1}\log|1-\lambda t|+\log|1-\frac{t}{\lambda}|dt\\ \nonumber
&=&\int_{1+\varepsilon}^\infty\frac{d}{d\lambda}\bigg|_{\lambda=1}\log|1-\lambda t|dt\\ \nonumber
&=&\int_{1+\varepsilon}^\infty(\frac{-t}{1-t}+\frac{t}{1-t})dt\\ \nonumber
&=&0.
\end{eqnarray*}
And
\begin{eqnarray*}
&&\frac{d}{d\lambda}\bigg|_{\lambda=1} \int_0^\infty \chi_{[0,1]}(\lambda t)\chi_{[0,1]}(\frac{t}{\lambda})dt\\
&=&\frac{d}{d\lambda}\bigg|_{\lambda=1} \int_{1-\varepsilon}^{1+\varepsilon}\chi_{[0,1]}(\lambda t)\chi_{[0,1]}(\frac{t}{\lambda})dt\\
&=&\frac{d}{d\lambda}\bigg|_{\lambda=1} \int_{1-\varepsilon}^{\frac{1}{\lambda}}\chi_{[0,1]}(\lambda t)\chi_{[0,1]}(\frac{t}{\lambda})dt\\
&=&\frac{d}{d\lambda}\bigg|_{\lambda=1}\frac{1}{\lambda}\\
&=&-1.
\end{eqnarray*}
\end{proof}

\textbf{Proof of Proposition 2.2}
From Lemma \ref{lem2.4} , Lemma \ref{lem2.5} and Lemma \ref{lem2.6}, let $\varepsilon\rightarrow 0$, it's easy to see that
\begin{eqnarray*}
\frac{d}{d\lambda}\bigg|_{\lambda=1}\int_0^\infty\varphi(\lambda t)\psi(\frac{t}{\lambda})dt&=&-AC\pi^2-BD
-\int_0^{1}[th_0'(t)h_1(t)-th_0(t)h_1'(t)]dt\\
&&-\int^\infty_{1}[th_0'(t)h_1(t)-th_0(t)h_1'(t)]dt.
\end{eqnarray*}

\section{The proof of our main Theorem}
\textbf{Proof of Theorem 1.1}
The following argument is similar as in \cite{RS}, for reader's convenience, we prove it here.
For strictly star-shaped domains $\Omega\subset R^n$, we denote it's center by $z_0$. Let us first assume that $\Omega$ is strictly star-shaped with respect to the origin, that is, $z_0=0$.

We prove that
\begin{equation}\label{1.4}
\int_\Omega(x\cdot\nabla v)(-\triangle)^su=\frac{d}{d\lambda}\bigg|_{\lambda=1^+}\int_\Omega v_\lambda(-\triangle)^sudx,
\end{equation}
where $\frac{d}{d\lambda}\mid_{\lambda=1^+}$ is the derivative from the right side at $\lambda=1$. Indeed, let $g=(-\triangle)^su$. By Corollary 1.6 in \cite{RS2} and Proposition 1.6 in \cite{RS}, $g$ is defined pointwise in $\Omega$ and $g\in L^\infty(\Omega)$. Then, making the change of variables $y=\lambda x$ and using that supp $u_\lambda=\frac{1}{\lambda}\Omega\subset\Omega$, for $\lambda>1$, we obtain
\begin{eqnarray*}
&&\frac{d}{d\lambda}\bigg|_{\lambda=1^+}\int_\Omega v_\lambda g(x)dx\\
&=&\lim_{\lambda\downarrow1}\int_\Omega\frac{v(\lambda x)-v(x)}{\lambda-1}g(x)dx\\
&=&\lim_{\lambda\downarrow1}\lambda^{-n}\int_{\lambda\Omega}\frac{v(y)-v(y/\lambda)}{\lambda-1}g(y/\lambda)dy\\
&=&\lim_{\lambda\downarrow1}\int_\Omega\frac{v(y)-v(y/\lambda)}{\lambda-1}g(y/\lambda)dy+\lim_{\lambda\downarrow1}\int_{(\lambda\Omega)/\Omega}\frac{-v(y/\lambda)}{\lambda-1}g(y/\lambda)dy.
\end{eqnarray*}
By Lebesgue's dominated convergence theorem,
\begin{equation*}
\lim_{\lambda\downarrow1}\int_\Omega\frac{v(y)-v(y/\lambda)}{\lambda-1}g(y/\lambda)dy=\int_\Omega(y\cdot\nabla v)g(y)dy,
\end{equation*}
since $g\in L^\infty(\Omega)$, $\nabla v(\xi)\leq C\delta(\xi)^{s-1}\leq C\lambda^{1-s}\delta(y)^{s-1}$ for all $\xi$ in the line segment joining $y$ and $y/\lambda$, and $\delta^{s-1}$ is integrable. Then gradient bound $|\nabla v(\xi)|\leq C\delta(\xi)^{s-1}$ follows from assumption (a) in Corollary 1.6 of \cite{RS2} with $\beta=1$. Hence, to prove $(\ref{1.4})$ it remains only to show that
\begin{equation*}
\lim_{\lambda\downarrow1}\int_{(\lambda\Omega)/\Omega}\frac{-v(y/\lambda)}{\lambda-1}g(y/\lambda)dy=0
\end{equation*}
Indeed, $|(\lambda\Omega)\backslash\Omega|\leq C(\lambda-1)$ and by Corollary 1.6 in \cite{RS2} $v\in C^s(R^n)$ and $v\equiv0$ outside $\Omega$. Hence,
$$\|v\|_{L^\infty((\lambda\Omega)\backslash\Omega)}\rightarrow 0.$$

Now by the integration by parts formula,
\begin{eqnarray*}
\int_\Omega v_\lambda(-\triangle)^s udx&=&\int_{R^n} v_\lambda(-\triangle)^s udx\\
&=&\int_{R^n} (-\triangle)^{s/2}v_\lambda(-\triangle)^{s/2} udx\\
&=&\lambda^s\int_{R^n} (-\triangle)^{s/2}v(\lambda x)(-\triangle)^{s/2} u(x)dx\\
&=&\lambda^{\frac{2s-n}{2}}\int_{R^n}(-\triangle)^{s/2}v(\sqrt{\lambda} y)(-\triangle)^{s/2} u(\frac{1}{\sqrt{\lambda}}y)dy,
\end{eqnarray*}
here we use the change of variables $y=\sqrt{\lambda}x$.

Furthermore, this leads to
\begin{eqnarray*}\label{1.5}\nonumber
&&\int_\Omega(\nabla v\cdot x)(-\triangle)^sudx\\ \nonumber
&=&\frac{d}{d\lambda}\bigg|_{\lambda\rightarrow1^+}\{\lambda^{\frac{2s-n}{2}}\int_{R^n}(-\triangle)^{s/2}v(\sqrt{\lambda} y)(-\triangle)^{s/2} u(\frac{1}{\sqrt{\lambda}}y)dy\}\\  \nonumber
&=&\frac{2s-n}{2}\int_{R^n}(-\triangle)^{s/2}u(-\triangle)^{s/2}vdx+\frac{d}{d\lambda}\bigg|_{\lambda\rightarrow1^+}\int_{R^n}(-\triangle)^{s/2}v(\sqrt{\lambda} y)(-\triangle)^{s/2} u(\frac{1}{\sqrt{\lambda}}y)dy\\
&=&\frac{2s-n}{2}\int_{R^n}(-\triangle)^{s/2}u(-\triangle)^{s/2}vdx+\frac{1}{2}\frac{d}{d\lambda}\bigg|_{\lambda\rightarrow1^+}\int_{R^n}(-\triangle)^{s/2}v(\lambda y)(-\triangle)^{s/2} u(\frac{1}{\lambda}y)dy.
\end{eqnarray*}
Hence, what remains is to prove that
\begin{eqnarray}\label{1.6}\nonumber
\frac{d}{d\lambda}\bigg|_{\lambda\rightarrow1^+}I_\lambda&=&-\Gamma(1+s)^2\int_{\partial\Omega}\frac{u}{\delta^s}\frac{v}{\delta^s}(x\cdot\nu)d\sigma\\  \nonumber
&-&\int_{\Omega}\bigg(x\nabla U(x)V(x)-xU(x)\nabla V(x)\bigg)dx\\
&-&\int_{R^n\backslash\Omega}\bigg(x\nabla U(x)V(x)-xU(x)\nabla V(x)\bigg)dx,
\end{eqnarray}
where $U(x)=(-\triangle)^{s/2}u(x)$ and $V(x)=(-\triangle)^{s/2}v(x)$ and
\begin{eqnarray}\label{1.7}
I_\lambda=\int_{R^n}(-\triangle)^{s/2}v(\lambda y)(-\triangle)^{s/2} u(\frac{1}{\lambda}y)dy.
\end{eqnarray}

Now for each $\theta\in S^{n-1}$ there exists a unique $r_\theta>0$ such that $r_\theta\theta\in\partial\Omega$. Writing the integral $(\ref{1.7})$ in spherical coordinates and using the change of variables $t=r/{r_\theta}$, we have
\begin{eqnarray*}
&&\frac{d}{d\lambda}\bigg|_{\lambda\rightarrow1^+}I_\lambda\\
&=&\frac{d}{d\lambda}\bigg|_{\lambda\rightarrow1^+}\int_{S^{n-1}}d\theta\int_0^\infty r^{n-1}(-\triangle)^{s/2}v(\lambda r\theta)(-\triangle)^{s/2}u(\frac{r}{\lambda}\theta)dr\\
&=&\frac{d}{d\lambda}\bigg|_{\lambda\rightarrow1^+}\int_{S^{n-1}}r_\theta d\theta\int_0^\infty(r_\theta t)^{n-1}(-\triangle)^{s/2}v(\lambda r_\theta t\theta)(-\triangle)^{s/2}u(\frac{r_\theta t}{\lambda}\theta)dt\\
&=&\frac{d}{d\lambda}\bigg|_{\lambda\rightarrow1^+}\int_{\partial\Omega}(x\cdot\nu )d\sigma(x)\int_0^\infty t^{n-1}(-\triangle)^{s/2}v(\lambda tx)(-\triangle)^{s/2}u(\frac{tx}{\lambda})dt,
\end{eqnarray*}
where
\begin{equation*}
r_\theta^{n-1}d\theta=(\frac{x}{|x|}\cdot\nu)d\sigma=\frac{1}{r_\theta}(x\cdot\nu)d\sigma
\end{equation*}
Note that the change of variables $S^{n-1}\rightarrow\partial\Omega$ that maps every point in $S^{n-1}$ to its radial projection on $\partial\Omega$, and is unique because of the strict star-shapedness of $\Omega$.

Fix $x_0\in\partial\Omega$ and define
\begin{equation*}
\varphi(t)=t^{\frac{n-1}{2}}(-\triangle)^{s/2}u(tx_0),\,  \psi(t)=t^{\frac{n-1}{2}}(-\triangle)^{s/2}v(tx_0)
\end{equation*}
By Proposition \ref{pro1},
\begin{eqnarray*}
\varphi(t)=c_1\{\log^-\delta(tx_0)+c_2\chi_{[0,1]}\}\bar{u}(tx_0)+\bar{h}_0(t),\\
\psi(t)=c_1\{\log^-\delta(tx_0)+c_2\chi_{[0,1]})\}\bar{v}(tx_0)+\bar{h}_1(t),
\end{eqnarray*}
in $[0,\infty)$, where $\bar{u}$ is a $C^\alpha(R^n)$ extension of $u/{\delta^s}|_\Omega$ and $\bar{v}$ is a $C^\alpha(R^n)$ extension of $v/{\delta^s}|_\Omega$, $\bar{h}_0, \bar{h}_1$ are  $C^\alpha([0,\infty))$ functions. Next we will modify this expression in order to apply Proposition $\ref{pro2}$.

 Since $\Omega$ is $C^{1,1}$ and strictly star- shaped, it is not difficult to see that $\frac{|r-r_\theta|}{\delta(r\theta)}$ is a Lipschitz function of $r$ in $[0,\infty)$ and is bounded below by a positive constant (independent of $x_0$). Similarly, $\frac{|t-1|}{\delta(r\x_0)}$ and $\frac{\min\{|t-1|,1\}}{\min\{\delta(tx_0),1\}}$ are positive and Lipschitz functions of $t$ in $[0,\infty)$. Therefore,
\begin{equation*}
\log^-|t-1|-\log^-\delta(tx_0)
\end{equation*}
is Lipschitz in $[0,\infty)$ as a function of $t$.

Hence, for $t\in[0,\infty)$,
\begin{eqnarray*}
\varphi(t)=c_1\{\log^-|t-1|+c_2\chi_{[0,1]}\}\bar{u}(tx_0)+H_0(t)\\
\psi(t)=c_1\{\log^-|t-1|+c_2\chi_{[0,1]}\}\bar{v}(tx_0)+H_1(t)
\end{eqnarray*}
where $H_1$ $H_2$ are  $C^\alpha$ functions on the same interval.

Moreover, note that the difference
\begin{equation*}
\bar{u}(tx_0)-\bar u(x_0)
\end{equation*}
is $C^\alpha$ and vanishes at $t=1$. So is $\bar{v}(tx_0)-\bar v(x_0)$. Thus, for $t\in [0,\infty)$
\begin{eqnarray*}
\varphi(t)=c_1\{\log^-|t-1|+c_2\chi_{[0,1]}\}\bar{u}(x_0)+h_0(t),\\
\psi(t)=c_1\{\log^-|t-1|+c_2\chi_{[0,1]}\}\bar{v}(x_0)+h_1(t),
\end{eqnarray*}
where $h_0$ $h_1$ are $C^\alpha$ in $[0,\infty)$ . Therefore,
\begin{eqnarray*}
&&\frac{d}{d\lambda}\bigg|_{\lambda\rightarrow1^+}I_\lambda\\
&=&\frac{d}{d\lambda}\bigg|_{\lambda\rightarrow1^+}\int_{\partial\Omega}(x\cdot\nu )d\sigma(x)\int_0^\infty\varphi(\lambda t)\psi(\frac{t}{\lambda}) dt,
\end{eqnarray*}
and from Proposition \ref{pro2}, we know
\begin{eqnarray*}
&&\frac{d}{d\lambda}\bigg|_{\lambda\rightarrow1^+}\int_0^\infty\varphi(\lambda t)\psi(\frac{t}{\lambda}) dt\\
&=&-c_1^2(\pi^2+c_2^2)\frac{u}{\delta^s}\frac{v}{\delta^s}-\int_0^{1}[th_0'(t)h_1(t)-th_0(t)h_1'(t)]dt\\
&&-\int^\infty_{1}[th_0'(t)h_1(t)-th_0(t)h_1'(t)]dt,
\end{eqnarray*}
and
\begin{eqnarray*}
c_1=\frac{\Gamma(1+s)\sin(\frac{\pi s}{2})}{\pi} \,\,and\,\,c_2=\frac{\pi}{\tan(\frac{\pi s}{2})}.
\end{eqnarray*}
Therefore
\begin{eqnarray*}
c_1^2(\pi^2+c_2^2)&=&\frac{\Gamma(1+s)^2\sin^2(\frac{\pi s}{2})}{\pi^2}\bigg(\pi^2+\frac{\pi^2}{\tan^2(\frac{\pi s}{2})}\bigg)\\
&=&\Gamma(1+s)^2.
\end{eqnarray*}
Now we can express $\frac{d}{d\lambda}\bigg|_{\lambda\rightarrow1^+}I_\lambda$ with $u$ and $v$, that is,
\begin{eqnarray*}
\frac{d}{d\lambda}\bigg|_{\lambda\rightarrow1^+}I_\lambda
&=&\frac{d}{d\lambda}\bigg|_{\lambda\rightarrow1^+}\int_{\partial\Omega}(x\cdot\nu )d\sigma(x)\int_0^\infty\varphi(\lambda t)\psi(\frac{t}{\lambda}) dt\\
&=&-\Gamma(1+s)^2\int_{\partial \Omega}\frac{u}{\delta^s}\frac{v}{\delta^s}(x\cdot\nu)d\sigma\\
&-&\int_{\Omega}\bigg(x\nabla\big((-\triangle)^{s/2}u\big)(-\triangle)^{s/2}v-x(-\triangle)^{s/2}u\nabla\big((-\triangle)^{s/2}v\big)\bigg)dx\\
&-&\int_{R^n\backslash\Omega}\bigg(x\nabla\big((-\triangle)^{s/2}u\big)(-\triangle)^{s/2}v-x(-\triangle)^{s/2}u\nabla\big((-\triangle)^{s/2}v\big)\bigg)dx.
\end{eqnarray*}
We complete the proof.

\textbf{Proof of Theorem 1.2}
From Theorem $\ref{thm2}$, we know
\begin{eqnarray*}
&&\int_{\Omega}(x\cdot\nabla v)(-\triangle)^s u\\
&=&\frac{2s-n}{2}\int_{\Omega}v(-\triangle)^s udx-\frac{\Gamma(1+s)^2}{2}\int_{\partial \Omega}\frac{u}{\delta^s}\frac{v}{\delta^s}(x\cdot\nu)d\sigma\\
&&-\frac{1}{2}\int_{\Omega}\bigg(x\nabla\big((-\triangle)^{s/2}u\big)(-\triangle)^{s/2}v-x(-\triangle)^{s/2}u\nabla\big((-\triangle)^{s/2}v\big)\bigg)dx\\
&&-\frac{1}{2}\int_{R^n\backslash\Omega}\bigg(x\nabla\big((-\triangle)^{s/2}u\big)(-\triangle)^{s/2}v-x(-\triangle)^{s/2}u\nabla\big((-\triangle)^{s/2}v\big)\bigg)dx.
\end{eqnarray*}
And using the integration by parts formula, we obtain
$$\int_{\Omega}|x|^av^p(x\cdot\nabla\nu)dx=-\int_{\Omega}\frac{n+a}{p+1}|x|^av^{p+1}dx.$$
Therefore,
\begin{eqnarray}\label{2.1}\nonumber
&&(s-\frac{n}{2})\int_{\Omega}v(-\triangle)^sudx-\frac{\Gamma(1+s)^2}{2}\int_{\partial \Omega}\frac{u}{\delta^s}\frac{v}{\delta^s}(x\cdot\nu)d\sigma\\ \nonumber
&-&\frac{1}{2}\int_{\Omega}\bigg(x\nabla\big((-\triangle)^{s/2}u\big)(-\triangle)^{s/2}v-x(-\triangle)^{s/2}u\nabla\big((-\triangle)^{s/2}v\big)\bigg)dx\\ \nonumber
&-&\frac{1}{2}\int_{R^n\backslash\Omega}\bigg(x\nabla\big((-\triangle)^{s/2}u\big)(-\triangle)^{s/2}v-x(-\triangle)^{s/2}u\nabla\big((-\triangle)^{s/2}v\big)\bigg)dx\\
&=&-\int_{\Omega}\frac{n+a}{p+1}|x|^av^{p+1}dx.
\end{eqnarray}
Let $u=v$ and $v=u$, for the second equation in problem (\ref{1.1}), we have
\begin{eqnarray}\label{2.2}\nonumber
&&(s-\frac{n}{2})\int_{\Omega}u(-\triangle)^svdx-\frac{\Gamma(1+s)^2}{2}\int_{\partial \Omega}\frac{v}{\delta^s}\frac{u}{\delta^s}(x\cdot\nu)d\sigma\\ \nonumber
&-&\frac{1}{2}\int_{\Omega}\bigg(x\nabla\big((-\triangle)^{s/2}v(x)\big)(-\triangle)^{s/2}u(x)-x(-\triangle)^{s/2}v(x)\nabla\big((-\triangle)^{s/2}u(x)\big)\bigg)dx\\ \nonumber
&-&\frac{1}{2}\int_{R^n\backslash\Omega}\bigg(x\nabla\big((-\triangle)^{s/2}v(x)\big)(-\triangle)^{s/2}u(x)-x(-\triangle)^{s/2}v(x)\nabla\big((-\triangle)^{s/2}u(x)\big)\bigg)dx\\
&=&-\int_{\Omega}\frac{n+b}{q+1}|x|^bu^{q+1}dx.
\end{eqnarray}
Adding up (\ref{2.1}) and (\ref{2.2}), we obtain the Pohozaev identity of the problem (\ref{1.1}),
\begin{eqnarray}\label{2.3}\nonumber
&&(s-\frac{n}{2})\int_{\Omega}u(-\triangle)^svdx+(s-\frac{n}{2})\int_{\Omega}v(-\triangle)^sudx-\Gamma(1+s)^2\int_{\partial \Omega}\frac{v}{\delta^s}\frac{u}{\delta^s}(x\cdot\nu)d\sigma\\
&=&-\int_{\Omega}\frac{n+b}{q+1}|x|^bu^{q+1}dx-\int_{\Omega}\frac{n+a}{p+1}|x|^av^{p+1}dx.
\end{eqnarray}

Since that $\Omega$ is a star-shaped domain, we must have $x\cdot\nu>0$. For $u, v>0$, it holds that
\begin{eqnarray*}
-\Gamma(1+s)^2\int_{\partial \Omega}\frac{v}{\delta^s}\frac{u}{\delta^s}(x\cdot\nu)d\sigma< 0.
\end{eqnarray*}
Hence by $(\ref{1.1})$ and $(\ref{2.3})$
\begin{eqnarray}\label{2.4}\nonumber
&&(s-\frac{n}{2})\int_{\Omega}u(-\triangle)^svdx+(s-\frac{n}{2})\int_{\Omega}v(-\triangle)^sudx\\ \nonumber
&=&(s-\frac{n}{2})\int_{\Omega}|x|^bu^{q+1}dx+(s-\frac{n}{2})\int_{\Omega}|x|^av^{p+1}dx\\
&>&-\frac{n+b}{q+1}\int_{\Omega}|x|^bu^{q+1}dx-\frac{n+a}{p+1}\int_{\Omega}|x|^av^{p+1}dx.
\end{eqnarray}
Since $u$ and $v\in H^s(R^n)$, we know that
\begin{eqnarray*}
\int_{\Omega}|x|^bu^{q+1}dx=\int_{R^n}|x|^bu^{q+1}dx=\int_{R^n}u(-\triangle)^svdx=\int_{R^n}(-\triangle)^{\frac{s}{2}}u(-\triangle)^{\frac{s}{2}}vdx,
\end{eqnarray*}
and
\begin{eqnarray*}
\int_{\Omega}|x|^av^{p+1}dx=\int_{R^n}|x|^av^{p+1}dx=\int_{R^n}v(-\triangle)^sudx=\int_{R^n}(-\triangle)^{\frac{s}{2}}u(-\triangle)^{\frac{s}{2}}vdx,
\end{eqnarray*}
thus
\begin{equation}\label{2.5}
\int_{\Omega}|x|^av^{p+1}dx=\int_{\Omega}|x|^bu^{q+1}dx.
\end{equation}
If $\frac{n+a}{p+1}+\frac{n+b}{q+1}\leq n-2s$, then
\begin{eqnarray*}
&&(s-\frac{n}{2}+\frac{n+b}{q+1})\int_\Omega|x|^bu^{q+1}dx+(s-\frac{n}{2}+\frac{n+a}{p+1})\int_\Omega|x|^av^{p+1}dx\\
&=&(2s-n+\frac{n+b}{q+1}+\frac{n+a}{p+1})\int_\Omega|x|^bu^{q+1}dx\\
&\leq&0,
\end{eqnarray*}
this is a contradiction with $(\ref{2.4})$. This completes our proof.

\noindent{\bf Acknowledgement}

The research was supported by NSFC(NO.11571176) and Natural Science Foundation
of the Jiangsu Higher Education Institutions (No.14KJB110017).
The authors would like to express sincere thanks to the anonymous
referee for his/her carefully reading the manuscript and valuable
comments and suggestions.

\bigskip

{\em Author's Addresses and Emails:}
\medskip

Pei Ma

Jiangsu Key Laboratory for NSLSCS

School of Mathematical Sciences

Nanjing Normal University

Nanjing, Jiangsu 210023, China;

Department of Mathematical Sciences

Yeshiva University

New York, NY, 10033, USA

mapei0620@126.com

\medskip

Fengquan Li

School of Mathematical Sciences

Dalian University of Technology

Dalian, Liaoning 116024, China

fqli@dlut.edu.cn

\medskip

Yan Li

Department of Mathematical Sciences

Yeshiva University

New York, NY, 10033, USA

yali3@mail.yu.edu

\end{document}